\DeclareMathOperator{\acosh}{arccosh}
\theoremstyle{plain}
\newtheorem{thm}{Theorem}[section]
\newtheorem{cor}[thm]{Corollary}
\newtheorem{prop}[thm]{Proposition}
\newtheorem{lem}[thm]{Lemma}
\theoremstyle{remark}
\newtheorem{remark}{\bf \quad \itshape  Remark}
\theoremstyle{plain}
\theoremstyle{definition}
\renewcommand{\bar}{\overline}
\newcommand{\R}{{\mathbb{R}}}
\newcommand{\Z}{{\mathbb{Z}}}
\newcommand{\bH}{{\mathbb{H}}}
\newcommand{\SL}{\mathrm{SL}}
\newcommand{\PSL}{\mathrm{PSL}}
\newcommand{\Alt}{{\raise 2pt\hbox{$\scriptstyle\bigwedge$}}}
\definecolor{myblue}{RGB}{80,80,160}
\definecolor{mygreen}{RGB}{80,160,80}
\newdimen\nodeSize
\newdimen\nodeDist
\tikzset{
	position/.style args={#1:#2 from #3}{
		at=(#3.#1), anchor=#1+180, shift=(#1:#2)
	}
}
\title{Geodesic cover of Fuchsian groups}
\author{Zhipeng Lu}
\address{Mathematisches Institut,
	Georg-August Universit\"{a}t G\"{o}ttingen,
	Bunsenstra{\ss}e 3-5,
	D-37073 G\"{o}ttingen,
	Germany}
\email{zhipeng.lu@uni-goettingen.de}
\keywords{Fuchsian groups, geometrically finite, infinitely generated}
\subjclass[2020]{20H10,
	52C10, 11P21}
\date{}
\begin{document}
	
	\maketitle
	
	\begin{abstract}
		We study unions of fundamental domains of a Fuchsian group, especially those with hyperbolic plane metric realizing the metric of the corresponding hyperbolic surface. We call these unions the \textit{geodesic covers} of the Fuchsian group or the hyperbolic surface. The paper contributes to showing that finiteness of geodesic covers is basically another characterization of geometrically finiteness. The resolution of geometrically finite case is based on Shimizu's lemma.
		
	\end{abstract}

	\section{Introduction}
	In the work of Lu-Meng \cite{Lu-Meng}, a notion called \textit{geodesic cover} of Fuchsian groups is introduced to deal with the Erd\H{o}s distinct distances problem in hyperbolic surfaces. There we initiate the investigation of quantitative aspects of the notion for special Fuchsian groups including the modular group and standard regular surface groups. In this paper, we conduct an overall qualitative study and give a relatively complete description of geodesic covers for general Fuchsian groups. 
	
	We recall the definition as follows. Let $\Gamma\leq\PSL_2(\R)$ be a Fuchsian group acting on the upper half plane $\bH^2$ by M\"{o}bius transformation, and $F\subset\bH^2$ be a fundamental domain of $\Gamma$ whose interior contains no two points in the same orbit of the M\"{o}bius transformation by $\Gamma$. A subset $\Gamma_0\subset\Gamma$, which we always assume contains the identity, is called a \textit{geodesic cover} of $\Gamma$ corresponding to $F$ if for any $p,q\in F$, we have
	\[\min_{\gamma_1,\gamma_2\in\Gamma}d_{\bH^2}(\gamma_1\cdot p,\gamma_2\cdot q)=\min_{\gamma_1,\gamma_2\in\Gamma_0}d_{\bH^2}(\gamma_1\cdot p,\gamma_2\cdot q),\]
	where $\gamma\cdot p$ denotes the M\"{o}bius transformation and $d_{\bH^2}$ denotes the hyperbolic metric in $\bH^2$. A weaker requirement gives rise to another definition: $\Gamma_1\subset\Gamma$ is a geodesic cover of $\Gamma$ if 
	\[\min_{\gamma\in\Gamma}d_{\bH^2}(p,\gamma\cdot q)=\min_{\gamma\in\Gamma_0}d_{\bH^2}(p,\gamma\cdot q).\]
	We may call the former definition the \textit{first geodesic cover} and the latter the \textit{second geodesic cover}. Clearly if $\Gamma_0$ is a first geodesic cover of $\Gamma$, then $\Gamma_0^{-1}\Gamma_0:=\{\gamma_1^{-1}\gamma_2\mid \gamma_1,\gamma_2\in\Gamma_0\}$ is a second geodesic cover of $\Gamma$. We focus on the second geodesic cover since it appears more convenient for estimation. If $\Gamma$ has finite geodesic covers, we call the smallest  size of (first or second) geodesic covers the (first or second) \textit{geodesic covering number} of $\Gamma$. 

	The definitions arises equivalently in the corresponding hyperbolic surface $S_\Gamma:=\Gamma\backslash\bH^2$ endowed the hyperbolic metric from $\bH^2$. For $p,q\in S_\Gamma$, we can pick two representatives (still denoted by $p,q$) in $F$ to realize their distance by $d_{S_\Gamma}(p,q)=\min_{\gamma\in\Gamma_Y}d_{\bH^2}(p,\gamma\cdot q)$. Thus sometimes we may also refer the same notion to the corresponding hyperbolic surface of a Fuchsian group. Note that although geodesic covers may depend on the choice of fundamental domains, the geodesic covering number only depend on the Fuchsian group. A problem may arise that, since in general two fundamental domains may not be commensurable, we are not sure if any two geodesic covers corresponding to different fundamental domains are commensurable.
	
	In \cite{Lu-Meng}, the geodesic covering number of the modular group $\PSL_2(\Z)$ is precisely estimated to be $4$ in the first definition and $10$ in the second. Also, those of standard regular hyperbolic surfaces of genus $g$ ($\geq 2$) are estimated to establish lower bounds of Erd\H{o}s distinct distances problem in such surfaces, see Theorem 1.2 and Proposition 3.1 of \cite{Lu-Meng}.
	
	In this paper, we extend our scope of estimation and prove
	\begin{thm}\label{thm-geometrically finite}
		Geometrically finite Fuchsian groups have finite geodesic covers.
	\end{thm}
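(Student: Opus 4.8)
The plan is to combine the coarse structure of geometrically finite Fuchsian groups with Shimizu's lemma, treating separately the pieces into which the quotient surface decomposes. Since $\Gamma$ is geometrically finite it is finitely generated and admits a finite-sided convex fundamental domain $F$ (a Dirichlet domain, say), and $S_\Gamma$ splits into a compact core $K$, finitely many cusp ends, and finitely many funnel ends. For the second geodesic cover it suffices to produce a finite set $\Gamma_0\ni\id$ so that for every $p,q\in F$ the surface distance $\min_{\gamma\in\Gamma}d_{\bH^2}(p,\gamma\cdot q)$ is already attained in $\Gamma_0$; I would assemble $\Gamma_0$ from finite contributions, one per piece, together with a finite ``transition'' set for pairs whose minimizing geodesic crosses from one piece to another. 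Since $\Gamma_0$ contains $\id$, it is then automatically a first geodesic cover as well: if $d_{\bH^2}(p,\gamma\cdot q)$ realizes the global minimum with $\gamma\in\Gamma_0$, then the pair $(\id,\gamma)\in\Gamma_0\times\Gamma_0$ realizes $\min_{\gamma_1,\gamma_2\in\Gamma}d_{\bH^2}(\gamma_1\cdot p,\gamma_2\cdot q)$.

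First I would dispose of the bounded parts. If $p,q$ both lie in a region of diameter $\le D$ (a fixed neighborhood of the core, or the bounded ``mouth'' of an end), then $\gamma=\id$ gives $d_{\bH^2}(p,\gamma\cdot q)\le D$, so any minimizing $\gamma$ satisfies $d_{\bH^2}(F,\gamma F)\le D$; by local finiteness of the tiling $\{\gamma F\}_{\gamma\in\Gamma}$ only finitely many $\gamma$ meet this inequality, and these go into $\Gamma_0$. A shortest geodesic of $S_\Gamma$ between two core points stays in a fixed bounded enlargement of $K$, and a shortest geodesic with one endpoint in $K$ and the other deep in an end must pass through the bounded mouth of that single end, so its lift is governed by the core piece together with one end piece. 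Hence the only genuinely unbounded phenomenon is a pair $p,q$ running off into one and the same end, which is what remains to be controlled.

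The decisive case is a cusp, where the injectivity radius tends to $0$ and infinitely many parabolic translates compete. Normalizing the cusp so that its parabolic stabilizer is $\langle z\mapsto z+1\rangle$, Shimizu's lemma yields a height $h$ for which the horoball $H=\{\,\mathrm{Im}\,z>h\,\}$ is precisely invariant: $\gamma H\cap H=\emptyset$ whenever $\gamma$ does not fix the cusp. Thus once $p,q$ lie in the cusp strip $\{\,0\le\mathrm{Re}\,z<1,\ \mathrm{Im}\,z>h\,\}$, any $\gamma$ outside the parabolic stabilizer sends $q$ to height $\le h$ while $p$ stays above $h$, whence $d_{\bH^2}(p,\gamma\cdot q)\ge\log(\mathrm{Im}\,p/h)$ from the estimate $\mathrm{Im}\,p/\mathrm{Im}\,w\le e^{d_{\bH^2}(p,w)}$; since $H$ is convex, the competing parabolic translate does no worse. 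For the translates themselves the formula $d_{\bH^2}(p,q+n)=\acosh\!\bigl(1+\tfrac{(\mathrm{Re}\,p-\mathrm{Re}\,q-n)^2+(\mathrm{Im}\,p-\mathrm{Im}\,q)^2}{2\,\mathrm{Im}\,p\,\mathrm{Im}\,q}\bigr)$ is minimized by the integer $n$ nearest to $\mathrm{Re}\,p-\mathrm{Re}\,q\in(-1,1)$, so only $n\in\{-1,0,1\}$ can minimize. Each cusp therefore contributes just finitely many parabolic elements, pre- and post-composed with the finitely many core elements above.

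Finally, the funnel ends require no new idea: there the injectivity radius is bounded below and in fact grows outward, so wrapping around the core geodesic strictly increases length and only boundedly many powers of the hyperbolic generator can minimize, while elements carrying $q$ out of a precisely invariant collar are excluded just as in the cusp. Taking $\Gamma_0$ to be the union of the finite sets from the core, the cusps, the funnels, and the transition regions yields a finite second geodesic cover, hence a finite first one. I expect the cusp analysis to be the sole real obstacle: without quantitative discreteness one cannot preclude an infinite sequence of distinct minimizers for points escaping up the cusp, and it is precisely the uniform separation supplied by Shimizu's lemma that collapses this to the three translates $n\in\{-1,0,1\}$.
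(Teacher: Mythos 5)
Your overall strategy --- local finiteness for pairs in a compact region, Shimizu's lemma to make a horoball at a cusp precisely invariant, and a collar argument for funnels --- is the same in spirit as the paper's, which also rests on Shimizu's lemma and on local finiteness of the Dirichlet tessellation; and your observation that a finite second geodesic cover containing the identity is automatically a first geodesic cover is correct. But the argument has genuine gaps, concentrated exactly where you wave at a ``finite transition set.'' First, even in the case you do treat (both $p,q$ high in the cusp strip), the lower bound $d_{\bH^2}(p,\gamma\cdot q)\ge\log(\mathrm{Im}\,p/h)$ for $\gamma$ outside the stabilizer does not beat the parabolic competitor: if $\mathrm{Im}\,q\gg\mathrm{Im}\,p$ then $\min_n d_{\bH^2}(p,q+n)\approx\log(\mathrm{Im}\,q/\mathrm{Im}\,p)$, which is much larger than $\log(\mathrm{Im}\,p/h)$. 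You need the two-horoball estimate $d_{\bH^2}(p,\gamma\cdot q)\ge\log(\mathrm{Im}\,p/h)+\log(\mathrm{Im}\,q/h)$ (the geodesic must leave $H$ and then enter the disjoint horoball $\gamma\cdot H$), and even this only excludes non-stabilizer elements once $\mathrm{Im}\,p$ exceeds a fixed multiple of $h$. Second, and decisively, the regime where $p$ lies below the precisely invariant horoball while $q$ escapes up the cusp is not handled at all: there the distances $\min_\gamma d_{\bH^2}(p,\gamma\cdot q)\approx\log(\mathrm{Im}\,q)$ are unbounded, so the minimizing translate $\gamma\cdot F$ is not confined to any fixed compact set and local finiteness gives nothing, while infinitely many $\gamma$ outside the stabilizer (all those with bounded lower-left entry) remain in competition a priori. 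Asserting a ``finite transition set'' here is the theorem, not a step toward it. (A smaller slip of the same kind: $d_{\bH^2}(F,\gamma\cdot F)\le D$ is satisfied by infinitely many $\gamma$ because $F$ is noncompact --- every parabolic stabilizing a cusp of $F$ gives $d_{\bH^2}(F,\gamma\cdot F)=0$; the local-finiteness argument must be run on a compact subset of $F$.)

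For comparison, the paper closes precisely this gap with a concrete device: it truncates every Dirichlet domain $D_\Gamma(z)$, $z\in F$, along the precisely invariant horoballs (the ``horocyclic surgery''), proves that the union of the truncated domains is compact (Lemma \ref{lem-compactness}), and then bounds the discarded candidates using the fact that a locally finite fundamental domain meets only finitely many $\Gamma$-translates of a horocyclic region (Proposition \ref{prop-horocyclic region}); the funnel ends are handled analogously by intersecting with the Nielsen region and invoking the corresponding statement for hypercyclic regions (Proposition \ref{prop-hypercyclic region}). Some quantitative input of this kind is unavoidable in your transition case, and your sketch does not supply it. You would also need to address elliptic elements explicitly (the paper passes to a torsion-free finite-index subgroup via Nielsen--Fenchel--Fox and Proposition \ref{prop-finite index}), though for a distance-based argument like yours this is a lesser issue than the transition regime.
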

    This is the combination of Theorem \ref{thm-co-finite} and Theorem \ref{thm-second kind}, which addresses Conjecture 3 of \cite{Lu-Meng}. Our proof is based on Shimizu's lemma introduced by Shimizu \cite{Shimizu}.
    
    Similar to the argument for Theorem 1.1 of \cite{Lu-Meng}, this in turn establishes Guth-Katz type lower bound of Erd\H{o}s distinct distances problem in hyperbolic surfaces corresponding to geometrically finite Fuchsian groups.
    \begin{cor}
    	A set of $N$ points in a hyperbolic surface $Y$ corresponding to a geometrically finite Fuchsian group determines $\geq c\frac{N}{K_Y^3log(K_YN)}$
    	distinct distances, where $K_Y$ denotes the geodesic covering number of $Y$ and $c>0$ is some absolute constant.
    \end{cor}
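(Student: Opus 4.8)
The plan is to adapt the Elekes--Sharir reduction together with the Guth--Katz polynomial partitioning bound, carried out in $\bH^2$ rather than in the Euclidean plane, invoking Theorem \ref{thm-geometrically finite} only through the finiteness of the geodesic covering number $K_Y$. Fix $N$ points $p_1,\dots,p_N$ in $Y$ and lift them to representatives $\tilde p_1,\dots,\tilde p_N$ in a fundamental domain $F\subset\bH^2$. Let $\Gamma_0\subset\Gamma$ be a second geodesic cover with $|\Gamma_0|=K_Y$, so that $d_{S_\Gamma}(p_i,p_j)=\min_{\gamma\in\Gamma_0}d_{\bH^2}(\tilde p_i,\gamma\cdot\tilde p_j)$ for all $i,j$. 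Writing $x$ for the number of distinct distances determined by $p_1,\dots,p_N$ in $Y$, I would first pass to the distance energy by the standard Cauchy--Schwarz inequality
\[x\;\geq\;\frac{\bigl(N(N-1)\bigr)^2}{Q},\qquad Q:=\#\{(i,j,k,l): d_{S_\Gamma}(p_i,p_j)=d_{S_\Gamma}(p_k,p_l)>0\},\]
reducing everything to an upper bound on $Q$.

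The second step transfers $Q$ to an honest hyperbolic energy. Consider the lifted set $\tilde P:=\Gamma_0\cdot\{\tilde p_1,\dots,\tilde p_N\}\subset\bH^2$, which has $|\tilde P|\leq K_YN$ points. For each ordered pair $(i,j)$ fix once and for all a minimizer $\gamma_{ij}\in\Gamma_0$ attaining $d_{S_\Gamma}(p_i,p_j)$. Then for any quadruple counted by $Q$ the four points $\tilde p_i,\gamma_{ij}\cdot\tilde p_j,\tilde p_k,\gamma_{kl}\cdot\tilde p_l$ all lie in $\tilde P$ (recall $\id\in\Gamma_0$) and are equidistant in $\bH^2$. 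Since interior points of $F$ lie in distinct $\Gamma$-orbits, one recovers $i,k$ from the first and third coordinates and $j,l$ from the $\Gamma$-orbits of the second and fourth, so this assignment is injective and
\[Q\;\leq\;Q_{\tilde P}:=\#\{(a,b,c,d)\in\tilde P^4: d_{\bH^2}(a,b)=d_{\bH^2}(c,d)>0\}.\]

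The third step is the hyperbolic Guth--Katz bound $Q_{\tilde P}\lesssim |\tilde P|^3\log|\tilde P|$ for any finite set in $\bH^2$. Here one reparametrizes the orientation-preserving isometry group $\PSL_2(\R)$ as a three-dimensional real variety and runs the Elekes--Sharir framework: the pairs of points realizing a common distance are encoded by the isometries carrying $a\mapsto c$, $b\mapsto d$, which sweep out low-degree curves in $\PSL_2(\R)$, and polynomial partitioning bounds their incidences just as in the Euclidean case. Granting this, $Q_{\tilde P}\lesssim (K_YN)^3\log(K_YN)$, and combining with the two displays yields $x\gtrsim \frac{N^4}{(K_YN)^3\log(K_YN)}=\frac{N}{K_Y^3\log(K_YN)}$, as claimed; the factor $K_Y^3$ is exactly the cube of the blow-up $|\tilde P|\leq K_YN$.

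The main obstacle is the hyperbolic incidence bound of the third step: one must verify that the Elekes--Sharir parametrization of $\PSL_2(\R)$ produces curves of bounded degree in suitable coordinates and that Guth--Katz polynomial partitioning transfers verbatim to this algebraic setting, incurring no logarithmic or polynomial losses beyond those recorded above. The remaining points---the injectivity in the second step, which uses only the interior-orbit condition built into the definition of a fundamental domain, and the measure-zero boundary cases---are routine. This is precisely the scheme already executed for Theorem 1.1 of \cite{Lu-Meng}; the only new input is that Theorem \ref{thm-geometrically finite} guarantees $K_Y<\infty$ for every geometrically finite $\Gamma$.
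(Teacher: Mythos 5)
Your proposal is correct and follows essentially the same route as the paper, which simply invokes the argument for Theorem 1.1 of \cite{Lu-Meng}: lift the $N$ points via a geodesic cover of size $K_Y$ to at most $K_YN$ points in $\bH^2$, bound the distance quadruple count there by the hyperbolic Elekes--Sharir/Guth--Katz estimate $\lesssim(K_YN)^3\log(K_YN)$, and conclude by Cauchy--Schwarz. The only input from the present paper is Theorem \ref{thm-geometrically finite} guaranteeing $K_Y<\infty$, exactly as you note.
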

    To the complementary, in section \ref{subsection-infinitely generated} we show that 
     \begin{thm}
     	Infinitely generated Fuchsian groups (without elliptic elements) can not have finite geodesic covers.
     \end{thm}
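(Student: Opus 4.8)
The plan is to work with a Dirichlet fundamental domain and to exploit the absence of elliptic elements in order to ``pin down'' a distinct group element at each of its sides, of which there will be infinitely many. As a first step I would reduce to the second notion: since a finite first geodesic cover $\Gamma_0$ produces the finite second geodesic cover $\Gamma_0^{-1}\Gamma_0$, it suffices to show that $\Gamma$ admits no finite \emph{second} geodesic cover. The operative reformulation is that $\Gamma_0\ni\id$ is a second geodesic cover if and only if for every pair $(p,q)\in F\times F$ the set of minimizers $M(p,q):=\{\gamma\in\Gamma : d_{\bH^2}(p,\gamma\cdot q)=\min_{\eta\in\Gamma}d_{\bH^2}(p,\eta\cdot q)\}$ meets $\Gamma_0$. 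Hence whenever a group element $g$ is the \emph{unique} minimizer for some admissible pair, i.e.\ $M(p,q)=\{g\}$, that $g$ must lie in every second geodesic cover. My goal is therefore to exhibit infinitely many distinct elements of $\Gamma$, each of which is a unique minimizer for some pair.

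For the construction I would fix any $z_0\in\bH^2$ and take $F=D(z_0)$, the Dirichlet domain centered at $z_0$. Because $\Gamma$ has no elliptic elements, no nontrivial element fixes an interior point, so $F$ is a genuine fundamental domain and the tiling $\{\gamma F\}$ is locally finite by proper discontinuity. Since $\Gamma$ is infinitely generated it is not geometrically finite, and for Fuchsian groups this forces $F$ to have infinitely many sides; write them as $\{s_n\}_{n\geq 1}$, where $s_n$ lies on the perpendicular bisector of $z_0$ and $\gamma_n\cdot z_0$ for pairwise distinct $\gamma_n\in\Gamma$. For each $n$ I would pick a point $p_n$ in the relative interior of $s_n$ and set $q_n:=\gamma_n^{-1}\cdot p_n$; as $\gamma_n^{-1}$ carries $s_n$ onto the paired side of $F$, we have $q_n\in\partial F$, and by construction $\gamma_n\cdot q_n=p_n$, so $d_{\bH^2}(p_n,\gamma_n\cdot q_n)=0$ and $\gamma_n\in M(p_n,q_n)$.

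For uniqueness, if $\gamma\cdot q_n=p_n=\gamma_n\cdot q_n$ then $\gamma_n^{-1}\gamma$ fixes the interior point $q_n$, whence $\gamma=\gamma_n$ because $\Gamma$ has no elliptic elements; every other $\gamma$ yields $d_{\bH^2}(p_n,\gamma\cdot q_n)>0$. Thus $M(p_n,q_n)=\{\gamma_n\}$. Should the definition be read over interior pairs only, I would instead use a nearby pair in $\mathrm{int}(F)\times\mathrm{int}(F)$: the orbit of $q_n$ is discrete, so the gap $\delta_n:=\inf_{\gamma\neq\gamma_n}d_{\bH^2}(p_n,\gamma\cdot q_n)$ is strictly positive, and by continuity together with local finiteness $\gamma_n$ remains the unique minimizer for all pairs sufficiently close to $(p_n,q_n)$. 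Either way each $\gamma_n$ is forced into every second geodesic cover; as the $\gamma_n$ are pairwise distinct and infinite in number, $\Gamma$ has no finite second, and therefore no finite first, geodesic cover.

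The main obstacle I expect is the passage from ``infinitely generated'' to ``infinitely many distinct forced transformations.'' This rests on the structural dichotomy for Fuchsian groups that finite generation is equivalent to admitting a finite-sided Dirichlet polygon, together with the fact that distinct sides of a Dirichlet domain are cut out by distinct group elements; I would want to state these carefully as the geometric input. The no-elliptic hypothesis enters precisely to secure uniqueness of the minimizer through triviality of interior-point stabilizers, and the remaining care is the continuity-and-local-finiteness argument needed if one must retreat from boundary points to interior pairs.
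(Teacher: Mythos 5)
Your argument is correct in substance and is essentially the route the paper takes: Proposition \ref{prop-geodesic cover including generating set} shows that, absent elliptic elements, any geodesic cover corresponding to a fundamental domain $F$ must contain $\Gamma(F)=\{\gamma\in\Gamma\mid \gamma\cdot F\cap F\neq\emptyset\}$, and Corollary \ref{cor-infinite geodesic cover} concludes by citing that $\Gamma(F)$ generates $\Gamma$. Your version specializes $F$ to a Dirichlet domain and forces only the side-pairing transformations $\gamma_n$ into the cover; choosing $p_n$ on a side and $q_n=\gamma_n^{-1}\cdot p_n$ makes the minimum distance $0$, so your uniqueness step is if anything cleaner than the paper's, which works with a small disc around an arbitrary point of $\gamma\cdot F\cap F$ and needs local finiteness of the tessellation. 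The structural input you invoke (finitely many sides of a Dirichlet polygon implies finite generation, because side pairings generate) is the same easy direction of Beardon's Theorem 9.2.7 that the paper uses, so no hard theorem is being smuggled in; your reduction to second covers, the injectivity of the map from sides to pairing elements (which does need the freeness of the action at $z_0$, as you note), the positivity of $\delta_n$, and the perturbation into the interior are all fine.

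The one place your write-up is narrower than the stated theorem is the quantification over fundamental domains: ``$\Gamma$ has finite geodesic covers'' means some fundamental domain $F$ admits a finite cover, so the negative statement requires ruling out every $F$, not just $F=D_\Gamma(z_0)$. Your construction is tied to the Dirichlet structure (perpendicular bisectors, side pairings), whereas the paper's Proposition \ref{prop-geodesic cover including generating set} is proved for an arbitrary (locally finite) fundamental domain. You would either need to run the forcing argument for a general locally finite $F$ --- which is exactly what the paper's small-disc argument accomplishes --- or argue that finiteness of a cover for one fundamental domain implies it for a Dirichlet domain; the paper explicitly flags commensurability of covers across different fundamental domains as unresolved, so that reduction is not free. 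This is a gap of scope rather than a broken step, and patching it lands you on the paper's proof.
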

	 
	It will be interesting to see more precise computational results on geodesic covering numbers of geometrically finite groups and their applications to various geometric and combinatorial problems. The conjectured relation between those numbers and signatures of Fuchsian groups or Fenchel-Nielsen coordinates in the Techm\"{u}ller space in \cite{Lu-Meng} still calls for more studies, to which the current paper may have implications.
	\subsection*{Acknowledgement} The author is supported by Harald Helfgott's Humboldt Professorship.

	\section{Generals on geodesic covers}\label{section-geodesic cover}
	In this section, we give a construction of geodesic covers for general Fuchsian groups using Dirichlet domains, and establish some preliminary results.
    
    \subsection{Construction of geodesic cover} For a general Fuchsian group $\Gamma\leq\PSL_2(\R)$ and any $z_0\in\bH^2$, we define the \textit{Dirichlet domain} for $\Gamma$ \textit{centered} at $z_0$ to be
    \[D_\Gamma(z_0):=\{z\in\bH^2\mid d_{\bH^2}(z,z_0)\leq d_{\bH^2}(z,\gamma\cdot z_0), \forall\gamma\in\Gamma\}.\] 
    Geometrically, $D_\Gamma(z_0)$ is the intersection of hyperbolic half-planes bounded by perpendicular bisectors of the geodesic segments between $z_0$ and $\gamma\cdot z_0$ for all $\gamma\in\Gamma$. If $z_0$ is not fixed by any non-identity element of $\Gamma$, then $D_\Gamma(z_0)$ becomes a connected fundamental domain for $\Gamma$. 
    Otherwise, $z_0$ may only be fixed by an elliptic element of $\Gamma$, in which case $z_0$ is an \textit{elliptic point} of $\Gamma$. In the case, $D_\Gamma(z_0)$ is not a fundamental domain, but we have 
    \begin{lem}\label{lem-Dirichlet domain}
    	For any Fuchsian group $\Gamma$ and an elliptic point $z_0$ of $\Gamma$, $D_\Gamma(z_0)$ meets each orbit $\Gamma\cdot z$ at least $|Stab_{\Gamma}(z_0)|$ times. 
    \end{lem}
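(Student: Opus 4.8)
The plan is to exploit the rotational symmetry of $D_\Gamma(z_0)$ under the stabilizer $H := \Stab_\Gamma(z_0)$. Recall two standard facts about Fuchsian groups that I would invoke at the outset: the stabilizer of any point of $\bH^2$ is a finite cyclic group, so $n := |H|$ is finite; and every non-identity element of $H$ is elliptic, having $z_0$ as its \emph{unique} fixed point in $\bH^2$. The central step is to show that $D_\Gamma(z_0)$ is $H$-invariant. To see this, fix $h \in H$ and $z \in D_\Gamma(z_0)$. Since $h$ is an isometry fixing $z_0$, for every $\gamma \in \Gamma$ one has $d_{\bH^2}(h\cdot z, z_0) = d_{\bH^2}(z, z_0)$ and $d_{\bH^2}(h\cdot z, \gamma\cdot z_0) = d_{\bH^2}(z, (h^{-1}\gamma)\cdot z_0)$. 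As $\gamma$ ranges over $\Gamma$, so does $h^{-1}\gamma$, and therefore the defining inequalities for membership in $D_\Gamma(z_0)$ transfer verbatim from $z$ to $h\cdot z$. Hence $h\cdot z \in D_\Gamma(z_0)$.

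Next I would record the fact, valid for any center, that $D_\Gamma(z_0)$ meets every orbit. Given $z$, proper discontinuity of $\Gamma$ makes the orbit $\Gamma\cdot z$ discrete, so the distances $d_{\bH^2}(\gamma\cdot z, z_0)$ attain a minimum at some $w = \gamma_0\cdot z$; by definition this minimizer lies in $D_\Gamma(z_0)$. Combining this with the invariance from the first step yields the desired count: the points $\{h\cdot w : h \in H\}$ all lie in $\Gamma\cdot z$ because each $h\in H\subset\Gamma$, and all lie in $D_\Gamma(z_0)$ by $H$-invariance.

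It remains to check these $n$ points are distinct. Provided $w \neq z_0$, the map $h \mapsto h\cdot w$ is injective: if $h\cdot w = w$ for some $h \neq \id$, then the elliptic element $h$ would fix $w \neq z_0$, contradicting the uniqueness of its fixed point. This produces $n = |H|$ distinct points of $\Gamma\cdot z$ inside $D_\Gamma(z_0)$, as claimed.

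The one point I expect to require care is the exceptional orbit $\Gamma\cdot z_0$ itself, where the minimizer is $w = z_0$ and all the images $h\cdot w$ collapse to the single point $z_0$. I would handle this by observing directly that $z_0$ is the \emph{only} point of its own orbit lying in $D_\Gamma(z_0)$: taking $\gamma' = \gamma$ in the defining inequality $d_{\bH^2}(\gamma\cdot z_0, z_0) \leq d_{\bH^2}(\gamma\cdot z_0, \gamma'\cdot z_0)$ forces $\gamma\cdot z_0 = z_0$. Thus the stated lower bound holds for every orbit with $z\notin\Gamma\cdot z_0$, and this solitary exceptional orbit is immaterial for the subsequent construction of finite geodesic covers.
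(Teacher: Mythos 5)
Your proof is correct and follows essentially the same route as the paper's: locate the distance-minimizer of the orbit inside $D_\Gamma(z_0)$, then apply the stabilizer $H=\Stab_\Gamma(z_0)$, using that a non-identity elliptic element fixes only $z_0$, to produce $|H|$ distinct orbit points in the domain. You are in fact more careful than the paper on two points: you spell out why $D_\Gamma(z_0)$ is $H$-invariant (the paper only notes equidistance from $z_0$, which by itself is not the membership condition for the Dirichlet domain), and you correctly flag the exceptional orbit $\Gamma\cdot z_0$, on which the lemma as literally stated fails because $z_0$ is the only point of its own orbit lying in $D_\Gamma(z_0)$ --- a case the paper's proof silently ignores but which, as you observe, is harmless for the later construction.
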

    \begin{proof}
    	For any orbit $\Gamma\cdot z$, there is a $\gamma_1\in\Gamma$ such that $d_{\bH^2}(z_0,\gamma_1\cdot z)=d_{\bH^2}(z_0,\Gamma\cdot z)$ by the discrete action of $\Gamma$, hence $\gamma_1\cdot z\in D_\Gamma(z_0)$. Clearly for any $z\in D_\Gamma(z_0)$ and $1\neq\gamma\in Stab_\Gamma(z_0)$, $d(z_0,z)=d(z_0,\gamma\cdot z)$. Hence $\gamma\cdot z\in D_\Gamma(z_0)$ and $\gamma\cdot z\neq z$, otherwise $\gamma=1$. Thus $Stab_\Gamma(z_0)\subset D_{\Gamma}(z_0)\cap\Gamma\cdot z$ which contains at least $|Stab_\Gamma(z_0)|$ points. 
    \end{proof}
    For regular $z$, $Stab_\Gamma(z)=\{1\}$ so that the lemma also applies. 
    For any $z_0\in\bH^2$ not fixed by any non-identity elements of $\Gamma$, define
	$B_\Gamma(z_0):=\bigcup_{z\in D_\Gamma(z_0)}D_\Gamma(z)$, and
	\begin{equation}\label{equation-construction of geodesic cover}U_\Gamma(z_0):=\{\gamma\in\Gamma\mid\gamma\cdot D_{\Gamma}(z_0)\cap B_\Gamma(z_0)\neq\emptyset\}.\end{equation}
	Then we simply have 
	\begin{prop}\label{prop-construction of geodesic cover}
		$U_\Gamma(z_0)$ is a geodesic cover of $\Gamma$ corresponding to $D_\Gamma(z_0)$.
	\end{prop}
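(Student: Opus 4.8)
The statement asserts that $U_\Gamma(z_0)$ is a geodesic cover of $\Gamma$ corresponding to $D_\Gamma(z_0)$; I would prove this in the second (more convenient) sense, i.e.\ verify the equality $\min_{\gamma\in\Gamma}d_{\bH^2}(p,\gamma\cdot q)=\min_{\gamma\in U_\Gamma(z_0)}d_{\bH^2}(p,\gamma\cdot q)$ for every pair $p,q\in D_\Gamma(z_0)$. Since $U_\Gamma(z_0)\subseteq\Gamma$, the inequality $\geq$ is automatic, so the whole task reduces to exhibiting, for each such pair, a global minimizer of $d_{\bH^2}(p,\gamma\cdot q)$ over $\Gamma$ that already lies in $U_\Gamma(z_0)$. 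First I would record two routine points: a minimizer $\gamma_0$ exists because $\Gamma$ acts properly discontinuously, so the orbit $\Gamma\cdot q$ is locally finite and the distances $d_{\bH^2}(p,\gamma\cdot q)$ attain their infimum; and $1\in U_\Gamma(z_0)$, since $z_0\in D_\Gamma(z_0)\subseteq B_\Gamma(z_0)$ forces $D_\Gamma(z_0)\cap B_\Gamma(z_0)\neq\emptyset$, as required by the standing convention that a geodesic cover contains the identity.

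The crux is to identify the minimizer metrically with membership in a Dirichlet domain. Let $\gamma_0$ minimize $d_{\bH^2}(p,\gamma\cdot q)$ and set $w:=\gamma_0\cdot q$. For any $\delta\in\Gamma$, minimality applied to $\delta\gamma_0\in\Gamma$ gives $d_{\bH^2}(p,\delta\cdot w)=d_{\bH^2}\bigl(p,(\delta\gamma_0)\cdot q\bigr)\geq d_{\bH^2}(p,\gamma_0\cdot q)=d_{\bH^2}(p,w)$. Using $\Gamma$-invariance $d_{\bH^2}(p,\delta\cdot w)=d_{\bH^2}(\delta^{-1}\cdot p,w)$ and letting $\gamma=\delta^{-1}$ range over $\Gamma$, this reads as $d_{\bH^2}(w,p)\leq d_{\bH^2}(w,\gamma\cdot p)$ for all $\gamma\in\Gamma$, which is exactly $w\in D_\Gamma(p)$. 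In words: the representative of $\Gamma\cdot q$ closest to $p$ lands in the Dirichlet domain centered at $p$. This is the step I expect to require the most care, as it is the precise bridge between the combinatorially defined set $U_\Gamma(z_0)$ and the metric minimization, and I would phrase it using only the defining inequality of $D_\Gamma(\cdot)$ so that it stays valid even when $p$ is an elliptic point and $D_\Gamma(p)$ is not a fundamental domain (cf.\ Lemma \ref{lem-Dirichlet domain}).

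To conclude, I would feed this into the construction of $B_\Gamma(z_0)$. Since $p\in D_\Gamma(z_0)$, the set $D_\Gamma(p)$ is one of the pieces in the union defining $B_\Gamma(z_0)$, so $D_\Gamma(p)\subseteq B_\Gamma(z_0)$ and in particular $w=\gamma_0\cdot q\in B_\Gamma(z_0)$; on the other hand $q\in D_\Gamma(z_0)$ gives $w=\gamma_0\cdot q\in\gamma_0\cdot D_\Gamma(z_0)$. Hence $w\in\gamma_0\cdot D_\Gamma(z_0)\cap B_\Gamma(z_0)$ is a common point, so $\gamma_0\in U_\Gamma(z_0)$ by \eqref{equation-construction of geodesic cover}, and therefore $\min_{\gamma\in U_\Gamma(z_0)}d_{\bH^2}(p,\gamma\cdot q)\leq d_{\bH^2}(p,\gamma_0\cdot q)=\min_{\gamma\in\Gamma}d_{\bH^2}(p,\gamma\cdot q)$, which is the missing inequality. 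The only genuine obstacle is the identification $w\in D_\Gamma(p)$ from the previous paragraph; once it is established, the membership $\gamma_0\in U_\Gamma(z_0)$ falls out immediately from how $B_\Gamma(z_0)$ is assembled, and no quantitative estimates are needed.
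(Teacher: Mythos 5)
Your proposal is correct and follows essentially the same route as the paper: both arguments reduce to showing that the point of the orbit $\Gamma\cdot q$ nearest to $p$ lies in $D_\Gamma(p)\subseteq B_\Gamma(z_0)$ and simultaneously in $\gamma_0\cdot D_\Gamma(z_0)$, whence $\gamma_0\in U_\Gamma(z_0)$. The only difference is cosmetic — the paper outsources the key membership $w\in D_\Gamma(p)$ to Lemma \ref{lem-Dirichlet domain}, while you verify it directly from the defining inequality of the Dirichlet domain.
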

    \begin{proof}
	For any $z_1,z_2\in D_\Gamma(z_0)$, by Lemma \ref{lem-Dirichlet domain}, there exists $z_2'\in D_{\Gamma}(z_1)$ such that $z_2'\sim z_2$ in $\Gamma\backslash\bH^2$ and $d_{\bH^2}(z_1,z_2')\leq d_{\bH^2}(z_1,\gamma\cdot z_2)$ for any $\gamma\in\Gamma$. Say $z_2'=\gamma'\cdot z_2$, then $z_2'\in\gamma'\cdot D_{\Gamma}(z_0)\cap B_\Gamma(z_0)\neq\emptyset$. Hence $\gamma'\in U_\Gamma(z_0)$.
	\end{proof}
	
	For $\Gamma$ co-compact, i.e. some fundamental Dirichlet domain $D_\Gamma(z_0)$ is compact, $B_\Gamma(z_0)$ is compact by local finiteness of Dirichlet tessellations (Theorem 3.5.1 of \cite{Katok}). In turn, further by local finiteness, $U_\Gamma(z_0)$ contains only finitely many elements of $\Gamma$, which accounts for Proposition 2.1 of \cite{Lu-Meng}.
	
	\subsection{Geodesic covers and finite index subgroups}\label{subsection-infinitely generated} Now we establish two basic results, one relating geodesic covers and finite index subgroups of Fuchsian groups, and one concerning groups without elliptic elements. Especially the latter one shows that generally infinitely generated Fuchsian groups can not have finite geodesic covers.
	\begin{prop}\label{prop-finite index}
		Let $H\leq\Gamma$ be any finite index subgroup of a Fuchsian group $\Gamma$. If $H$ has finite geodesic covers, then so does $\Gamma$ and the geodesic covering number of $\Gamma$ is no more than $[\Gamma:H]$ times that of $H$.
	\end{prop}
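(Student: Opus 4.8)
The plan is to combine a coset decomposition of $\Gamma$ over $H$ with a \emph{compatible} pair of fundamental domains. Set $n=[\Gamma:H]$ and fix right coset representatives with $g_1=1$, so that $\Gamma=\bigsqcup_{i=1}^{n}Hg_i$. Beginning from any fundamental domain $F$ of $\Gamma$ (say a Dirichlet domain $D_\Gamma(z_0)$ at a regular $z_0$), I would form $F_H:=\bigcup_{i=1}^{n}g_iF$ and first check that this is a fundamental domain for $H$. Covering is immediate, since every $\gamma\in\Gamma$ is uniquely $hg_i$, so the $H$-translates $\{hF_H\}_{h\in H}$ are merely a regrouping of the $\Gamma$-tiles $\{\gamma F\}_{\gamma\in\Gamma}$, which already cover $\bH^2$. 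For the interior condition, if the interiors of $hF_H$ and $h'F_H$ met for $h\neq h'$ in $H$, then the interiors of two $\Gamma$-translates $hg_iF$ and $h'g_jF$ would meet; since $F$ is a fundamental domain for $\Gamma$ this forces $hg_i=h'g_j$, and distinctness of the cosets then gives $i=j$ and $h=h'$, a contradiction. (Local finiteness of the Dirichlet tessellation handles the edge points, and connectedness of $F_H$ plays no role in the definition.)

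Next I would use the decomposition to write, for any $p,q\in F$,
\[\min_{\gamma\in\Gamma}d_{\bH^2}(p,\gamma\cdot q)=\min_{1\le i\le n}\ \min_{h\in H}d_{\bH^2}\bigl(p,h\cdot(g_i\cdot q)\bigr).\]
The crucial point is that $F_H$ was built precisely so that $p\in F\subseteq F_H$ and $g_i\cdot q\in g_iF\subseteq F_H$ hold simultaneously, with no correction depending on $q$. Because the geodesic covering number depends only on the group, as recorded in the introduction, $H$ admits a finite geodesic cover $H_0$ with respect to this particular $F_H$ whose cardinality equals the covering number of $H$. Applying the defining property of $H_0$ to each pair $(p,\,g_i\cdot q)\in F_H\times F_H$ then replaces $\min_{h\in H}$ by $\min_{h\in H_0}$ in every inner minimum.

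Assembling the pieces, for all $p,q\in F$,
\[\min_{\gamma\in\Gamma}d_{\bH^2}(p,\gamma\cdot q)=\min_{1\le i\le n}\ \min_{h\in H_0}d_{\bH^2}\bigl(p,(hg_i)\cdot q\bigr),\]
so $\Gamma_0:=H_0\cdot\{g_1,\dots,g_n\}=\{hg_i\mid h\in H_0,\ 1\le i\le n\}$ is a (second) geodesic cover of $\Gamma$ corresponding to $F$, of size at most $n\,|H_0|=[\Gamma:H]$ times the geodesic covering number of $H$; this delivers both finiteness and the stated bound. I expect the genuine obstacle to be the fundamental-domain bookkeeping rather than any analysis: the argument collapses unless the cover $H_0$ may be taken with respect to exactly the domain $F_H=\bigcup_i g_iF$, for otherwise pulling $g_i\cdot q$ back into a fundamental domain of $H$ would cost a $q$-dependent element and destroy finiteness. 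The compatible construction of $F_H$, together with the independence of the covering number from the choice of fundamental domain, is what neutralizes this difficulty.
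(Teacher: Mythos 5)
Your argument is essentially the paper's own proof: the same right coset decomposition $\Gamma=\bigsqcup_i Hg_i$, the same induced fundamental domain $\bigcup_i g_iF$ for $H$ (which the paper simply cites from Katok, Theorem 3.1.2, rather than verifying directly), the same splitting of the minimum over $\Gamma$ into minima over the cosets, and the same resulting cover $H_0g_1\cup\cdots\cup H_0g_n$. The point you flag as the genuine obstacle --- that $H_0$ must be a cover of $H$ relative to exactly the domain $\bigcup_i g_iF$ --- is handled in the paper by the same device (choosing the domains compatibly), so the two proofs coincide.
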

    \begin{proof}
    	 Let $\Gamma=Hg_1\cup\cdot\cup Hg_n$ be the right coset decomposition. If $F$ is a fundamental domain of $\Gamma$, then $g_1\cdot F\cup\cdots\cup g_n\cdot F$ is a fundamental domain of $H$, see Theorem 3.12 of \cite{Katok}. We may choose $F$ such that there is a corresponding finite geodesic cover $H_0\subset H$. For any $z_1\neq z_2\in F$, we have
    	\begin{align*}
    	&\min_{\gamma\in\Gamma}d_{\bH^2}(z_1,\gamma\cdot z_2)=\min_{1\leq i\leq n}\min_{\gamma\in H}d_{\bH^2}(z_1,\gamma\cdot(g_i\cdot z_2))\\
    	=&\min_{1\leq i\leq n}\min_{\gamma\in H_0}d_{\bH^2}(z_1,\gamma g_i\cdot z_2)=\min_{\gamma\in H_0g_1\cup\cdots\cup H_0g_n}d_{\bH^2}(z_1,\gamma\cdot z_2).
    	\end{align*}
    	Hence $H_0g_1\cup\cdots\cup H_0g_n$ is a geodesic cover of $\Gamma$ which implies the lemma.
    \end{proof}
    If we can show that some well-behaved finite index subgroup of a Fuchsian group has finite geodesic cover, then so does the bigger group. We will use this fact to deal with geometrically finite groups in the next section.
    
    Next, we prove the following result based on geometric considerations.
    \begin{prop}\label{prop-geodesic cover including generating set}
    	Let $\Gamma$ be any Fuchsian group without elliptic elements, and $\Gamma_0$ be its geodesic cover say corresponding to a fundamental domain $F$ of $\Gamma$. Then  $\Gamma_0\supset\{\gamma\in\Gamma\mid \gamma\cdot F\cap F\neq\emptyset\}:=\Gamma(F)$. 
    \end{prop}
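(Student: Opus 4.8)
The plan is to show that every nonidentity $\gamma \in \Gamma(F)$ is \emph{forced} into any geodesic cover by exhibiting a test pair $p,q \in F$ whose minimal translated distance is realized \emph{only} by $\gamma$. Concretely, I would fix $1 \neq \gamma \in \Gamma(F)$ (the identity already lies in $\Gamma_0$ by the standing convention, so nothing is to be proved for it) and choose a point $z \in \gamma \cdot F \cap F$, which is nonempty by hypothesis. I then set $p := z$ and $q := \gamma^{-1} \cdot z$. Since $z \in \gamma \cdot F$ we get $q \in F$, and trivially $p \in F$, so $(p,q)$ is an admissible pair for the geodesic cover condition. Here it is convenient to take $F$ closed, which is the standard convention for the fundamental domains in play, so that a boundary intersection point $z$ genuinely lies in both $F$ and $\gamma \cdot F$.

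With this choice one has $\gamma \cdot q = z = p$, hence $d_{\bH^2}(p,\gamma\cdot q)=0$ and therefore $\min_{g\in\Gamma} d_{\bH^2}(p,g\cdot q)=0$, the minimum being attained because the orbit $\Gamma\cdot q$ is discrete. By the defining property of the (second) geodesic cover applied to this pair, $\min_{g\in\Gamma_0} d_{\bH^2}(p,g\cdot q)$ equals the same value $0$, so there must exist some $g\in\Gamma_0$ with $g\cdot q = p$, i.e. $g\gamma^{-1}\cdot z = z$.

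The final step is to identify $g$ with $\gamma$. The element $g\gamma^{-1}$ fixes the point $z\in\bH^2$, so it lies in $\Stab_\Gamma(z)$. This is exactly where the hypothesis that $\Gamma$ has no elliptic elements is used: $\Gamma$ then acts freely on $\bH^2$, every point stabilizer is trivial, and so $g\gamma^{-1}=1$, that is $g=\gamma$. Consequently $\gamma\in\Gamma_0$, and since $\gamma$ was an arbitrary nonidentity element of $\Gamma(F)$ we conclude $\Gamma(F)\subset\Gamma_0$.

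The argument is short, and the single point deserving care is precisely this last one: the conclusion $g=\gamma$ rests entirely on the triviality of point stabilizers, and it is the main (indeed the only) obstacle. Were an elliptic element present, the minimizing $g$ could differ from $\gamma$ by a nontrivial rotation fixing $z$, so that $\gamma$ itself need not belong to $\Gamma_0$; this both explains why the no-elliptic hypothesis is essential and marks the natural boundary of the statement.
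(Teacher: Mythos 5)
Your proof is correct and rests on the same mechanism as the paper's: produce a test pair in $F$ whose minimizer over $\Gamma$ is forced to equal $\gamma$ because the absence of elliptic elements makes all point stabilizers in $\bH^2$ trivial. Your choice $p=z$, $q=\gamma^{-1}\cdot z$ makes the minimal distance $0$, which streamlines the paper's version (which instead picks a point $z$ near $w$ inside a small disc $N$ and works with a small positive distance, the disc serving only to guarantee uniqueness of the minimizing orbit point), but the argument is essentially identical.
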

    \begin{proof}
    	Our proof is motivated by the proof of Theorem 9.3.3 of Beardon \cite{Beardon}. For any $1\neq\gamma\in\Gamma(F)$, suppose $w\in\gamma\cdot F\cap F$. Then there is an open disc $N$ with center $w$ and elements $\gamma_0$ ($=1$), $\gamma_1,\dots,\gamma_t\in\Gamma$ such that $\gamma_j=\gamma$ for some $j$, and 
    	\[w\in\bigcap_{i=0}^{t}\gamma_i\cdot F,\quad N\subset\bigcup_{i=0}^t\gamma_i\cdot F.\]
    	$N$ can be chosen small enough so that no other vertices of any $\gamma_i\cdot F$ falls inside $N$. More features are shown in Figure 9.3.1 of \cite{Beardon} on page 220.
    	
    	Now that $\Gamma$ has no elliptic elements, for $N$ small enough, any two points in $N$ are not in the same orbit of $\Gamma$ by local finiteness. Choose any point $z\in \gamma\cdot F\cap N$, and its conjugate $z'=\gamma^{-1}\cdot z\in F$. By definition, there is some $\gamma_0\in\Gamma_0$ such that 
    	\[d_{\bH^2}(w,z)=\min_{\gamma\in\Gamma}d_{\bH^2}(w,\gamma\cdot z')=d_{\bH^2}(w,\gamma_0\cdot z').\]
    	Then $z=\gamma_0\cdot z'=\gamma_0\gamma^{-1}\cdot z$, which implies $\gamma=\gamma_0$ since there are no elliptic elements in $\Gamma$. This shows that any elements of $\Gamma(F)$ belongs to $\Gamma_0$.
    \end{proof}
    \begin{remark}
    	Proposition 3.1 of \cite{Lu-Meng} says the smaller set  $\{\gamma\in\Gamma\mid\gamma\cdot D_{\Gamma}(z_0)\cap D_\Gamma(z_0)\neq\emptyset\}$ is a geodesic cover for $\Gamma=\PSL_2(\Z)$. It is hard to tell if this set consists in a geodesic cover in general from the above proof. 
    \end{remark} 
    The proposition reveals the following basic fact:
    \begin{cor}\label{cor-infinite geodesic cover}
    Any infinitely generated Fuchsian group without elliptic elements can not have finite geodesic covers.	
    \end{cor}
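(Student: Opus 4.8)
The plan is to reduce the corollary to the classical fact that the neighbour transformations of a locally finite fundamental domain generate the whole group, and to combine this with the lower bound on geodesic covers furnished by Proposition \ref{prop-geodesic cover including generating set}. Throughout let $\Gamma$ be infinitely generated and without elliptic elements, and let $F$ be a fundamental domain together with a geodesic cover $\Gamma_0$. Since the Dirichlet domain $D_\Gamma(z_0)$ is connected and locally finite (Theorem 3.5.1 of \cite{Katok}) and contains no elliptic fixed point when $z_0$ is regular, I would take $F=D_\Gamma(z_0)$, so that all hypotheses of the generation theorem are available.

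First I would record the containment $\Gamma_0\supseteq\Gamma(F)$, where $\Gamma(F)=\{\gamma\in\Gamma\mid\gamma\cdot F\cap F\neq\emptyset\}$; this is exactly Proposition \ref{prop-geodesic cover including generating set}, valid because $\Gamma$ has no elliptic elements. Hence it suffices to show that $\Gamma(F)$ is infinite.

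Next I would invoke the side-pairing generation theorem for Fuchsian groups: for a locally finite fundamental domain the side-pairing transformations generate $\Gamma$ (Theorem 9.3.3 of \cite{Beardon}, whose geometric mechanism — the chain of copies $\gamma_0\cdot F,\dots,\gamma_t\cdot F$ tiling a small disc about a boundary point — was already used in the proof of Proposition \ref{prop-geodesic cover including generating set}). Every side-pairing transformation $\gamma$ satisfies $\gamma\cdot F\cap F\neq\emptyset$, so these elements lie in $\Gamma(F)$. Were $\Gamma(F)$ finite, the side-pairings would form a finite generating set and $\Gamma$ would be finitely generated, contradicting the hypothesis. Therefore $\Gamma(F)$ is infinite, and so is $\Gamma_0\supseteq\Gamma(F)$.

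The main point requiring care is the applicability of the generation theorem, that is, the choice of $F$: one must ensure that the relevant neighbour/side-pairing elements really do generate $\Gamma$, which forces $F$ to be connected and locally finite and makes the closed-domain convention in $\Gamma(F)$ (permitting vertex as well as edge contacts) the correct one. Choosing $F$ to be a Dirichlet domain resolves both issues at once. A secondary matter is that the statement concerns every geodesic cover of $\Gamma$ rather than a single one; this is handled by observing that the argument applies verbatim to any locally finite fundamental domain, so that no admissible choice of $F$ can produce a finite cover, and $\Gamma$ admits no finite geodesic cover.
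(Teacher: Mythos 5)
Your proof is correct and follows essentially the same route as the paper: both combine Proposition \ref{prop-geodesic cover including generating set} (so $\Gamma_0\supseteq\Gamma(F)$) with the fact that $\Gamma(F)$ generates $\Gamma$, concluding that a finite geodesic cover would force $\Gamma$ to be finitely generated. The only cosmetic difference is that the paper quotes Theorem 9.2.7 of \cite{Beardon} directly (the neighbour set $\Gamma(F)$ generates $\Gamma$) rather than passing through the side-pairing theorem 9.3.3, which makes your extra care about restricting to Dirichlet domains unnecessary but harmless.
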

    \begin{proof}
    	Note that in general, $\Gamma(F)$ generates $\Gamma$, see Theorem 9.2.7 of \cite{Beardon}. With the notations in Proposition \ref{prop-geodesic cover including generating set}, if some $\Gamma_0$ is finite, so is the corresponding $\Gamma(F)$, then $\Gamma=\langle\Gamma(F)\rangle$ becomes finitely generated. 
    \end{proof}
    Note that in general any Fuchsian group, has a finite index subgroup without elliptic elements, which in the case of geometrically finite groups is accounted as Nielsen-Fenchel-Fox theorem, see Proposition \ref{prop-Fox} later. Thus the corollary exhibits groups with only infinite geodesic covers for general infinitely generated Fuchsian groups. To conclude the scenario, we will show that finitely generated Fuchsian groups have finite geodesic covers in the next section.
    \section{Geodesic covers of geometrically finite Fuchsian groups} 
    \subsection{Geometry of geometrically finite Fuchsian groups} This section contributes to introducing preliminary notions on geometrically finite Fuchsian groups following Katok \cite{Katok}. 
    
    Call a Fuchsian group $\Gamma\leq\PSL_2(\R)$ \textit{geometrically finite} if there exists one (hence every) geodesically convex fundamental domain of $\Gamma$ having finitely many sides. Equivalently, $\Gamma$ is finitely generated. 
    Also co-finite Fuchsian groups, i.e. with fundamental domains of finite hyperbolic area, are precisely geometrically finite of first kind.

    We describe some basic facts on the geometry of boundary of a fundamental Dirichlet domain $D:=D_\Gamma(z_0)$ for an arbitrary Fuchsian group $\Gamma$. 
    By definition, the boundary of $D$ is a collection of geodesic segments or segments of the real axis (free sides), called \textit{sides}. An intersection point of two sides is called a \textit{vertex}. Two points of $D$ are congruent, i.e. in the same orbit of $\Gamma$, if and only if they belong to the boundary $\partial D$. In this case, they are of same distance to $z_0$ (see Theorem 9.4.3 of \cite{Beardon}). 
    
    Let us consider the vertices in congruence, each class of which is called a \textit{cycle}. By local finiteness, a cycle contains only finitely many points. If a vertex $v\in\bH^2$ of $D$ is an elliptic point fixed by $\gamma\Gamma$, it must lie on $\partial D$, and every elliptic point of $\Gamma$ is congruent to one on $\partial D$. Also the \textit{elliptic cycle} of $v$ is fixed by elliptic elements congruent to $\gamma$ in $\Gamma$. This reveals the following
    \begin{prop}[Theorem 3.5.2 of \cite{Katok}]\label{prop-elliptic cycles}
    	With the above notations,  the elliptic cycles of $\partial D$ 1-1 correspond to the congruence classes of elliptic points or maximal elliptic subgroups of $\Gamma$. 
    \end{prop}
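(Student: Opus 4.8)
The plan is to exhibit the correspondence explicitly and verify it is a bijection, building on the facts recorded just above (that every elliptic vertex lies on $\partial D$, that every elliptic point of $\Gamma$ is congruent to one on $\partial D$, and that a cycle is a finite congruence class of vertices). First I would define the map sending an elliptic cycle $C$ --- one containing at least one elliptic vertex $v$ --- to the $\Gamma$-orbit $\Gamma\cdot v$, equivalently to the conjugacy class of the finite cyclic stabilizer $\Stab_\Gamma(v)$. This is well defined: all vertices of $C$ are mutually congruent by the definition of a cycle, so $\Gamma\cdot v$ is independent of the representative, and since $\Stab_\Gamma(\gamma\cdot v)=\gamma\,\Stab_\Gamma(v)\,\gamma^{-1}$, congruent elliptic points carry conjugate stabilizers.

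For surjectivity I would start from an arbitrary congruence class of elliptic points, i.e.\ the $\Gamma$-orbit of a point fixed by some nontrivial elliptic element. The recorded fact ensures this orbit meets $\partial D$, and the crux --- which I expect to be the main obstacle --- is to show that such a boundary point is genuinely a vertex rather than an interior point of a side. Let $v\in\partial D$ be fixed by an elliptic $\sigma$ of order $n\geq 2$. The $n$ images $\sigma^i\cdot D$ for $0\leq i<n$ have pairwise disjoint interiors and all contain $v$; being images of $D$ under rotations about $v$ they subtend equal angles $\theta$ there, so $n\theta\leq 2\pi$. If $v$ were interior to a side then $\theta=\pi$, forcing $n\leq 2$; hence for $n\geq 3$ the point $v$ is a genuine corner, and the borderline case $n=2$ is absorbed by the convention that an order-two fixed point splits the ambient geodesic into two sides, so that $v$ counts as a vertex. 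Thus the orbit meets $\partial D$ in a vertex, which lies in a unique elliptic cycle whose image is the given orbit.

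For injectivity, suppose two elliptic cycles $C_1,C_2$ share the same image; choosing elliptic vertices $v_1\in C_1$ and $v_2\in C_2$, their orbits coincide, so $v_1$ and $v_2$ are congruent, and since a cycle is exactly a congruence class of vertices this forces $C_1=C_2$. Finally I would reconcile the two stated targets: the stabilizer of an elliptic point is a maximal finite cyclic subgroup, because a finite subgroup of $\PSL_2(\R)$ is cyclic and fixes a common point of $\bH^2$, and maximality identifies it with the full point-stabilizer; conversely every maximal elliptic subgroup is the stabilizer of its unique fixed point. The relation $\Stab_\Gamma(\gamma\cdot v)=\gamma\,\Stab_\Gamma(v)\,\gamma^{-1}$ then matches $\Gamma$-orbits of elliptic points with conjugacy classes of maximal elliptic subgroups bijectively, and combining the three steps yields the asserted one-to-one correspondence.
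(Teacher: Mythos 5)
Your proof is correct and matches the approach the paper takes: the paper does not actually prove this proposition but imports it from Katok (Theorem 3.5.2), and its brief justification preceding the statement --- elliptic points lie on $\partial D$, every elliptic point is congruent to one on $\partial D$, and the elliptic cycle of $v$ is fixed by conjugate elliptic elements --- is exactly the correspondence you construct and verify. The only substantive detail you add beyond the paper's sketch is the angle argument showing that an elliptic boundary point of order $\geq 3$ is genuinely a vertex, together with the order-two convention, which is the same device used in the cited source.
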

    
    If $v$ is an elliptic vertex of $D$, fixed by $\gamma\in\Gamma$ say with order $k$, then $\gamma$ maps edges to edges and the inner angle at $v$ is at most $2\pi/k$. If $k=2$, then $\gamma$ flips the edge containing $v$ around $v$. Moreover, we have
    \begin{prop}[Theorem 3.5.3 of \cite{Katok}]\label{prop-angle sum in elliptic cycle}
    	With the above notations, let $\theta_1,\dots,\theta_t$ be the internal angles at all vertices in an elliptic cycle and $m$ be the order of the elliptic elements fixing these vertices. Then $\theta_1+\cdots+\theta_t=2\pi/m$.
    \end{prop}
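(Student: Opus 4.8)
The plan is to analyze the Dirichlet tessellation in a small neighborhood of a single representative of the elliptic cycle and to evaluate the total angle around that point in two ways. Fix one vertex $v=v_1$ of the cycle with internal angle $\theta_1$. Since $v$ is an elliptic point, by Proposition \ref{prop-elliptic cycles} its stabilizer $\Stab_\Gamma(v)$ is a maximal cyclic elliptic subgroup of order $m$, generated by a hyperbolic rotation $R$ about $v$ through angle $2\pi/m$. By local finiteness of the Dirichlet tessellation (Theorem 3.5.1 of \cite{Katok}), only finitely many translates $g\cdot D$ contain $v$, and those that do cover a full neighborhood of $v$ with pairwise disjoint interiors. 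Hence the sum of their internal angles at $v$ equals $2\pi$; this is the quantity I will compute.

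Next I would set $\mathcal{C}:=\{g\cdot D\mid v\in g\cdot D\}$ and record two structural facts. First, for each $g\cdot D\in\mathcal{C}$ the point $g^{-1}\cdot v$ is a vertex of $D$ congruent to $v$, hence one of the cycle vertices $v_1,\dots,v_t$, and since $g$ is an isometry the internal angle of $g\cdot D$ at $v$ equals the internal angle $\theta_i$ of $D$ at $v_i:=g^{-1}\cdot v$. This defines a map $\Phi\colon\mathcal{C}\to\{v_1,\dots,v_t\}$. Second, $\langle R\rangle$ acts on $\mathcal{C}$ by $R^j\cdot(g\cdot D)=(R^jg)\cdot D$; this action is free because distinct group elements yield distinct translates of the fundamental domain $D$, so $\mathcal{C}$ splits into orbits of size $m$, and the $m$ translates in one orbit are isometric by powers of $R$ and therefore share a common internal angle at $v$. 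A short computation shows $\Phi(R^j\cdot(g\cdot D))=(R^jg)^{-1}\cdot v=g^{-1}R^{-j}\cdot v=g^{-1}\cdot v$, so $\Phi$ is constant on orbits and descends to $\bar\Phi$ on the set of orbits.

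The heart of the argument is to check that $\bar\Phi$ is a bijection onto the cycle $\{v_1,\dots,v_t\}$. For injectivity, if $g^{-1}\cdot v=h^{-1}\cdot v$ then $gh^{-1}\in\Stab_\Gamma(v)=\langle R\rangle$, so $g\cdot D$ and $h\cdot D$ lie in the same orbit; for surjectivity, writing a cycle vertex as $v_i=g_i^{-1}\cdot v$ gives $v\in g_i\cdot D\in\mathcal{C}$ with $\Phi(g_i\cdot D)=v_i$. Granting this, each orbit over $v_i$ consists of $m$ copies each of internal angle $\theta_i$, contributing $m\theta_i$ to the total, so summing over the $t$ orbits gives $\sum_{i=1}^t m\theta_i=2\pi$, that is $\theta_1+\cdots+\theta_t=2\pi/m$. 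I expect the main obstacle to be the bookkeeping in the local picture, namely justifying that the translates containing $v$ meet only along shared sides so that their angles add to the full $2\pi$ without overlap; this rests squarely on local finiteness together with the fact that $v$ is a genuine interior vertex, so that no free sides enter. An equivalent route is the classical one through side-pairing (cycle) transformations: following consecutive translates around $v$ produces an elliptic cycle transformation fixing $v$ whose rotation angle is $\theta_1+\cdots+\theta_t$ and which must be a power of $R$; matching it with $R$ after $m$ revolutions again yields $m(\theta_1+\cdots+\theta_t)=2\pi$.
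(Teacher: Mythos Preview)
The paper does not supply its own proof of this proposition: it is quoted verbatim as Theorem~3.5.3 of Katok \cite{Katok} and used as a black box, so there is nothing in the paper to compare against directly.

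That said, your argument is correct and is essentially the standard proof one finds in Katok or Beardon. The key steps --- that the translates $g\cdot D$ containing $v$ tile a neighborhood of $v$ by local finiteness, that $\Stab_\Gamma(v)\cong\langle R\rangle$ acts freely on this collection (because distinct group elements give distinct tiles), and that the quotient map $\bar\Phi$ to the cycle vertices is a bijection --- are all sound, and your computation $m\sum_i\theta_i=2\pi$ follows cleanly. The only point you flagged as a possible obstacle, namely that the angles at $v$ sum to exactly $2\pi$ without overlap, is indeed handled by local finiteness together with the fundamental-domain property (interiors of distinct translates are disjoint), so there is no real gap. The alternative route you sketch via the elliptic cycle transformation is also correct and is closer in spirit to how Beardon presents it.
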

    If a cycle has no fixed points, we may set $m=1$ and then $\theta_1+\cdots+\theta_t=2\pi$. Regarded as infinite order elliptic elements, each parabolic element of $\Gamma$ has a unique fixed point on $\hat{\R}:=\R\cup\{\infty\}$, which are usually called \textit{cusps} of $\Gamma$. For co-finite groups, we have
    \begin{prop}[Theorem 4.2.5 of \cite{Katok}]\label{prop-cusp}
    	Suppose $\Gamma$ has a non-compact Dirichlet fundamental domain $D$ of finite hyperbolic area. Then
    	
    	$(i)$ each vertex of $D$ on $\hat{\R}$ is a cusp;
    	
    	$(ii)$ each cusp is congruent to a vertex of $D$ on $\hat{\R}$.
    \end{prop}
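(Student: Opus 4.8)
The plan is to normalise each relevant boundary point to $\infty$, read off the local picture of $D$ there, and then separate the two assertions by a finite-area dichotomy. Throughout I use that, $D$ having finite hyperbolic area, $\Gamma$ is of the first kind: its limit set is all of $\hat{\R}$, so $D$ has no free sides, and (being geometrically finite) $D$ has finitely many sides and hence finitely many vertices on $\hat{\R}$.

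For part $(i)$, let $v$ be a vertex of $D$ on $\hat{\R}$; conjugating in $\PSL_2(\R)$ I may assume $v=\infty$. Since the only geodesics with an endpoint at $\infty$ are the vertical half-lines, the two sides of $D$ meeting at $v$ are vertical, say $\mathrm{Re}\,z=c_1$ and $\mathrm{Re}\,z=c_2$, and near $\infty$ the domain is the strip $\{c_1\le \mathrm{Re}\,z\le c_2,\ \mathrm{Im}\,z>Y\}$. The side-pairing transformations of $D$ lie in $\Gamma$ and permute the finitely many sides, hence also the finitely many vertices on $\hat{\R}$; following the cycle of side-pairings issuing from $\infty$ I obtain a nontrivial $p\in\Gamma$ fixing $\infty$, together with the finitely many copies of $D$ around $\infty$ whose union $W$ serves as a fundamental domain for $\langle p\rangle$ inside a horoball $\{\mathrm{Im}\,z>Y\}$. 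The key point is now the finite-area input: since $W$ is a finite union of $\Gamma$-copies of $D$, the region $W\cap\{\mathrm{Im}\,z>Y\}$ has finite area, whereas a hyperbolic element fixing $\infty$ acts on any such horoball with infinite-area quotient. Thus $p$ cannot be hyperbolic (nor elliptic, which fixes no boundary point), so $p$ is parabolic and $v=\infty$ is a cusp.

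For part $(ii)$, let $\xi\in\hat{\R}$ be a cusp, fixed by a parabolic $p\in\Gamma$. By Shimizu's lemma there is a horoball $H$ at $\xi$ that is precisely invariant under $\langle p\rangle$, so its image in $S_\Gamma=\Gamma\backslash\bH^2$ is an embedded cusp neighbourhood of finite area. Because $D$ surjects onto $S_\Gamma$, this cusp neighbourhood is covered by the image of $D$, so $\Gamma$-translates of $D$ meet $H$ arbitrarily close to $\xi$. By local finiteness of the Dirichlet tessellation only finitely many translates meet $H$, so a single fixed translate $\gamma D$ accumulates at $\xi$; equivalently $\gamma^{-1}\xi\in\overline{D}\cap\hat{\R}$. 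Since $\Gamma$ is co-finite and $D$ has no free sides, a boundary point of $D$ on $\hat{\R}$ can only be a vertex, so $\gamma^{-1}\xi$ is a vertex of $D$ on $\hat{\R}$ and $\xi=\gamma(\gamma^{-1}\xi)$ is congruent to it.

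The main obstacle will be part $(i)$: turning the informal ``cycle of side-pairings around the ideal vertex'' into a genuine element $p\in\Gamma$ fixing $\infty$ and showing it is parabolic rather than loxodromic. Everything hinges on the finite-area hypothesis, which forces the horoball quotient to be finite and thereby excludes a hyperbolic cusp transformation; making the bookkeeping of the cusp cycle precise, and in particular checking $p\neq 1$ (i.e.\ that $\infty$ is genuinely identified and not merely the endpoint of a free side, which co-finiteness rules out), is the step that needs the most care. Part $(ii)$ is comparatively soft once Shimizu's lemma supplies a precisely invariant horoball, the only subtlety being to confirm that the accumulation point $\gamma^{-1}\xi$ is an honest vertex, which again follows because co-finiteness forbids free sides.
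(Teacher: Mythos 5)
The paper offers no proof of this statement at all --- it is imported verbatim as Theorem 4.2.5 of Katok's \emph{Fuchsian Groups} --- so there is nothing internal to compare against; I am assessing your argument on its own terms. Part $(ii)$ is essentially sound, with one attribution error: the step ``only finitely many translates of $D$ meet $H$'' does \emph{not} follow from local finiteness, which is a statement about compact subsets of $\bH^2$ (a horoball is not compact and can a priori meet infinitely many tiles). It is precisely Beardon's Theorem 9.2.8(ii), which the paper quotes as Proposition \ref{prop-horocyclic region}, and you need to invoke it as a separate nontrivial input. Granting that, the pigeonhole step and the ``no free sides'' step are fine.

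Part $(i)$ has two genuine gaps, located exactly where you flag difficulty. First, the existence of a \emph{nontrivial} $p\in\Gamma$ fixing $\infty$ is asserted, not proved: the Poincar\'e vertex-cycle bookkeeping at an \emph{ideal} vertex is delicate for the same reason as above --- local finiteness gives no control at a boundary point, so one cannot simply ``follow the side-pairings around $\infty$'' and know the cycle closes up with a nontrivial holonomy; this is where the finite-area hypothesis must actually be spent, and you never spend it there. Second, and more seriously, your exclusion of the hyperbolic case is circular: the premise that $W$ ``serves as a fundamental domain for $\langle p\rangle$ inside a horoball'' presupposes that $\langle p\rangle$ preserves a horoball at $\infty$, which is exactly what fails when $p$ is hyperbolic. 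If $p(z)=\lambda z$ with $\lambda>1$, then $p$ moves every horoball at $\infty$, the translates $p^n(W)$ are vertical strips accumulating on the second fixed point and cover no horoball, and ``the area of the quotient of the horoball by $\langle p\rangle$'' is not a dichotomy you can set up before knowing the type of $p$. A correct and much shorter exclusion uses the fact that $D$ is a \emph{Dirichlet} domain centered at some $w$: if a hyperbolic $p\in\Gamma$ fixed the ideal vertex $\infty$, normalize $p(z)=\lambda z$ with $\lambda>1$; for points $z=x_0+iy\in D$ with $y\to\infty$ one computes $d_{\bH^2}(z,p\cdot w)=d_{\bH^2}(z,w)-\log\lambda+o(1)<d_{\bH^2}(z,w)$, contradicting the defining inequality of $D_\Gamma(w)$. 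This repair also shows that your proof cannot remain at the level of ``convex finite-area fundamental domain''; somewhere the Dirichlet structure (or an equivalent input) must be used.
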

    Similarly, congruence classes of cusps of $\Gamma$ 1-1 correspond to maximal parabolic subgroups of $\Gamma$. 
    
    Note that co-finite Fuchsian groups are geometrically finite of first kind. As to the remaining geometrically finite groups $\Gamma$ of second kind, according to their limit sets $\Lambda(\Gamma):=\{z\in\hat{\R}\mid z\in \bar{\Gamma\cdot a}\text{ for some }a\in\bH^2\}$, we classify them as follows. If $|\Lambda(\Gamma)|\leq 2$, then $\Gamma$ is \textit{elementary} (Exercise 3.8 of \cite{Katok}), i.e. has finite orbits in $\bH^2\cup\hat{\R}$, which can be described as follows.
    \begin{prop}[Theorem 2.4.3 of \cite{Katok}]\label{prop-elementary groups}
    	Any elementary Fuchsian group is either cyclic or is conjugate in $\PSL_2(\R)$ to a group generated by $g_k=\begin{pmatrix}
    	k&0\\0&1/k
    	\end{pmatrix}$ ($k>1$) and $S=\begin{pmatrix}
    	0&-1\\1&0
    	\end{pmatrix}$.
    \end{prop}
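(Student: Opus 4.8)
The plan is to classify $\Gamma$ by the cardinality of its limit set $\Lambda(\Gamma)\in\{0,1,2\}$, using throughout two standard facts about isometries of $\bH^2$: an element is elliptic, parabolic, or hyperbolic according as it has a fixed point in $\bH^2$, a single fixed point on $\hat{\R}$, or two fixed points on $\hat{\R}$; and the fixed points of all parabolic and hyperbolic elements lie in $\Lambda(\Gamma)$. I would first record that a finite subgroup of $\PSL_2(\R)$ fixes a point of $\bH^2$ (take the circumcenter of any finite orbit), hence is conjugate into the rotation group $\mathrm{PSO}(2)\cong\SO(2)$, and a finite subgroup of the circle is cyclic.

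If $\Lambda(\Gamma)=\emptyset$, then every orbit in $\bH^2$ is finite (an infinite discrete orbit would accumulate on $\hat{\R}$), so $\Gamma$ is finite and therefore cyclic by the previous remark. If $|\Lambda(\Gamma)|=1$, the single limit point $p$ is fixed by all of $\Gamma$ because $\Lambda(\Gamma)$ is $\Gamma$-invariant; a hyperbolic element would contribute a second point of $\Lambda(\Gamma)$, and an elliptic element fixes no boundary point, so every nonidentity element is parabolic fixing $p$. Conjugating $p$ to $\infty$ identifies $\Gamma$ with a discrete group of translations $z\mapsto z+b$, which is infinite cyclic.

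The interesting case is $|\Lambda(\Gamma)|=2$, say $\Lambda(\Gamma)=\{p,q\}$. Since $\Lambda(\Gamma)$ is $\Gamma$-invariant, the subgroup $\Gamma_0\leq\Gamma$ fixing $p$ and $q$ individually has index $1$ or $2$ and consists of the identity together with hyperbolic elements whose axis is the geodesic joining $p$ and $q$. Conjugating $(p,q)$ to $(0,\infty)$ turns $\Gamma_0$ into a discrete group of dilations $z\mapsto \lambda^2 z$, i.e.\ a discrete subgroup of $(\R_{>0},\times)$, which is infinite cyclic generated by the smallest $k^2>1$; thus $\Gamma_0=\langle g_k\rangle$. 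If $\Gamma=\Gamma_0$ we are again cyclic. Otherwise any $w\in\Gamma\smallsetminus\Gamma_0$ interchanges $0$ and $\infty$; an orientation-preserving isometry swapping two boundary points is an order-$2$ elliptic of the form $z\mapsto -c/z$, and a further conjugation by a diagonal element (which preserves the diagonal form of $\Gamma_0$) normalizes $w$ to $S$. Hence $\Gamma=\langle g_k,S\rangle$, the infinite dihedral group, since $S g_k S^{-1}=g_k^{-1}$.

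The main obstacle will be justifying the discreteness restrictions that keep the case analysis clean. Concretely, in the $|\Lambda(\Gamma)|=2$ case one must rule out parabolic elements and elliptic elements of order $>2$: if a parabolic or a second hyperbolic shared the fixed point $\infty$ with $g_k$, the conjugates $g_k^{-n}(\cdot)g_k^{n}$ would accumulate at the identity, violating discreteness; and an elliptic permuting $\{p,q\}$ must swap them (it fixes no boundary point), so its square fixes both $p$ and $q$ while remaining elliptic unless it is trivial, forcing order $2$. Once these exclusions are in place, collecting the four subcases shows that $\Gamma$ is cyclic except precisely when it is the dihedral group $\langle g_k,S\rangle$, as claimed.
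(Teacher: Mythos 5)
The paper offers no proof of this proposition at all --- it is imported verbatim as Theorem 2.4.3 of Katok's book --- so there is nothing internal to compare against; your argument is the standard limit-set trichotomy that Katok's own proof follows, and it is correct, including the correct exclusions of parabolics and of elliptics of order greater than $2$ in the two-point case. The only step worth making explicit is that $\Gamma_0$ is nontrivial when $|\Lambda(\Gamma)|=2$ (otherwise $\Gamma$ would have order at most $2$ and $\Lambda(\Gamma)$ would be empty), after which the dihedral normalization $\Gamma=\langle g_k,S\rangle$ goes through exactly as you describe.
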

    If $\Gamma$ is finite cyclic, then it is elliptic and $\Lambda(\Gamma)=\emptyset$; if $\Gamma$ is infinite cyclic, then it is parabolic and $|\Lambda(\Gamma)|=1$, or it is hyperbolic congruent to $\langle g_k\rangle$ for some $k>0$, in which case $|\Lambda(\Gamma)|=2$. For $\Gamma$ conjugate to $\left\langle g_k,S\right\rangle$, similarly $|\Lambda(\Gamma)|=2$. For example, $\Lambda\left(\left\langle g_k,S\right\rangle\right)=\{0,\infty\}$ for any $k>1$.
    
    If $|\Lambda(\Gamma)|>2$, then $\Gamma$ is non-elementary and $\Lambda(\Gamma)$ is a perfect nowhere dense subset of $\hat{\R}$ (hence uncountably infinite) by Theorem 3.4.6 of \cite{Katok}. In this case, the complement of $\Lambda(\Gamma)$ in $\hat{\R}$ is a union of countably many open intervals $\{I_j\}_{j=1}^\infty$ that are mutually disjoint. Let $L_j$ be the geodesic striding over $I_j$ and connecting its two end points in $\bH^2$, and let $H_j$ be the open half-plane bounded by $L_j$ away from $I_j$. Now we introduce the notion of \textit{Nielsen region} following 8.4 of \cite{Beardon} as
    \begin{equation}\label{equation-Nielsen region} N_\Gamma:=\bigcap_{j=1}^\infty H_j.\end{equation}    Note that $\{I_j\}$ is $\Gamma$-invariant, so is $\{H_j\}$. Therefore $N_\Gamma$ is a $\Gamma$-invariant geodesically convex subset of $\bH^2$. Actually, it is the smallest such non-empty set, see Theorem 8.5.2 of \cite{Beardon}. The following fact reveals its significance: 
    \begin{prop}[Theorem 10.1.2 of \cite{Beardon}]\label{prop-Nielsen region}
    	A non-elementary Fuchsian group $\Gamma$ is finitely generated if and only if for any convex fundamental domain $D$ of $\Gamma$, the hyperbolic area of $D\cap N_\Gamma$ is finite.
    \end{prop}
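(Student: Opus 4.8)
The plan is to identify $N_\Gamma$ with the convex hull of the limit set $\Lambda(\Gamma)$—which is exactly the smallest non-empty $\Gamma$-invariant geodesically convex set of \eqref{equation-Nielsen region}—so that $D\cap N_\Gamma$ serves as a fundamental region for the restricted action of $\Gamma$ on $N_\Gamma$, and to relate its area to a side-pairing generating set. Since $N_\Gamma$ is $\Gamma$-invariant, the area of $D\cap N_\Gamma$ equals that of the quotient $\Gamma\backslash N_\Gamma$ and is therefore independent of the convex fundamental domain $D$; hence it suffices to establish the equivalence for a single convex $D$, say a Dirichlet domain. Because $D$ and $N_\Gamma$ are both convex, $D\cap N_\Gamma$ is a topological disc, which lets me apply Gauss--Bonnet throughout.

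For the forward implication, suppose $\Gamma$ is finitely generated, so that $D$ has finitely many sides. The sides of $D$ lying on $\hat{\R}$ are the free sides, each subtending one complementary interval $I_j$ of $\Lambda(\Gamma)$; passing from $D$ to $D\cap N_\Gamma=D\cap\bigcap_j H_j$ removes exactly the finitely many funnels cut off by those geodesics $L_j$ that meet $D$. By Proposition \ref{prop-cusp} the parabolic vertices lie in $\Lambda(\Gamma)\subset\partial N_\Gamma$ and are not excised, so the result is a convex polygon with finitely many sides whose only ideal vertices are cusps. Gauss--Bonnet then gives it finite area.

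For the converse, assume $\operatorname{area}(D\cap N_\Gamma)<\infty$. The transformations pairing those sides of $D$ that meet the interior of $N_\Gamma$ generate $\Gamma$, since $D\cap N_\Gamma$ is a locally finite convex fundamental region on which Poincaré's side-pairing argument applies; it therefore suffices to bound the number of such sides. Here I would run a Siegel-type argument: by Gauss--Bonnet the total angle defect $\sum_v(\pi-\theta_v)$ over the vertices equals $\operatorname{area}(D\cap N_\Gamma)+2\pi$, which is finite, while grouping the vertices into cycles and invoking Proposition \ref{prop-angle sum in elliptic cycle} (each cycle having angle sum $2\pi/m$) shows that every cycle contributes an amount bounded below by a positive constant. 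Hence there are finitely many cycles, finitely many vertices, and finitely many sides, so $\Gamma$ is finitely generated.

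I expect the converse to be the main obstacle. The delicate point is the \emph{uniform} lower bound on each cycle's contribution to the angle defect: degenerate configurations—an order-two elliptic fixed point presenting as a single straight vertex, or accidental cycles with very few vertices—must be checked separately, and one must confirm that the free sides along $\partial N_\Gamma$ together with their ideal endpoints do not spoil this lower bound. Finite area alone does not forbid infinitely many sides whose lengths shrink while their endpoints accumulate on $\hat{\R}$; excluding this genuinely uses the convexity of $N_\Gamma$ in tandem with the cycle angle-sum relation of Proposition \ref{prop-angle sum in elliptic cycle}.
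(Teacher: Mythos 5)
First, note that the paper does not actually prove this statement: it is imported wholesale as Theorem 10.1.2 of \cite{Beardon}, and the only proof content the paper offers is the one-line observation that the free sides of $D$ lie in the intervals $I_j$, so that $D\cap N_\Gamma$ is a finite-sided polygon of finite area by Gauss--Bonnet --- which is precisely your forward direction. Your reduction via $\Gamma$-invariance of $N_\Gamma$ (so that $\mathrm{area}(D\cap N_\Gamma)=\mathrm{area}(\Gamma\backslash N_\Gamma)$ is independent of the convex fundamental domain $D$) is sound and is the right way to handle the ``for any'' quantifier. The forward direction is then fine \emph{provided} you grant that a finitely generated Fuchsian group admits some finite-sided convex fundamental polygon; be aware that this is itself part of the Siegel/Fenchel--Nielsen circle of results that Theorem 10.1.2 is normally used to establish, so in a self-contained treatment your opening step ``$\Gamma$ finitely generated, so $D$ has finitely many sides'' risks circularity. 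The paper sidesteps this by treating ``geometrically finite $=$ finitely generated'' as a known equivalence.

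The genuine gap is in the converse, and it is exactly the one you flag but do not close. Splitting the angle defect $\sum_v(\pi-\theta_v)=\mathrm{area}(D\cap N_\Gamma)+2\pi$ into cycles and invoking Proposition \ref{prop-angle sum in elliptic cycle} gives each accidental cycle a contribution $(t-2)\pi\ge\pi$ (each proper vertex has angle $<\pi$, forcing $t\ge 3$) and each elliptic cycle of order $m\ge 3$ a contribution at least $\pi-2\pi/3=\pi/3$; but an order-two elliptic fixed point interior to a side is a one-vertex cycle with angle exactly $\pi$, contributing exactly $0$. The finite-area hypothesis therefore does not bound the number of such cycles, hence does not bound the number of self-paired sides, and no uniform positive lower bound per cycle exists. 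This is not a routine check to be ``done separately'': it requires an independent input (Beardon supplies one; alternatively the paper only ever applies the proposition after passing to a torsion-free finite-index subgroup via Proposition \ref{prop-Fox}, where the issue vanishes). Similarly, your claim that the pairings of the sides of $D$ meeting $N_\Gamma$ generate $\Gamma$ is asserted rather than proved; it needs the observation that $N_\Gamma$ is connected (being convex) and $\Gamma$-invariant and that $D\cap N_\Gamma$ is a locally finite fundamental region for the restricted action, so that the generation theorem (Theorem 9.2.7 of \cite{Beardon}, used elsewhere in the paper) applies to it. In short, the architecture matches the cited source, but as written the proposal is an outline whose hardest step is left open.
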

    One key observation is that the free sides (segments of $\R$) are each contained in some $I_j$, then $D\cap N_\Gamma$ is an polygon in $\bH^2$ which has finite area by Gauss-Bonnet formula, see the proof on page 254 of \cite{Beardon}.    
   
    \subsection{Geodesic cover of co-finite groups}
    Now we try to establish the seemingly surprising fact that, the construction of (\ref{equation-construction of geodesic cover}) gives a finite geodesic cover for geometrically finite Fuchsian groups of first kind, i.e. co-finite groups, with a technicality assumption that they contain no elliptic elements.
    
    To proceed we introduce two useful corollaries of Shimizu's lemma which first appeared in \cite{Shimizu}. For a given Fuchsian group $\Gamma$, denote by $\tilde{\bH}^2$ the union of $\bH^2$ and the cusps of $\Gamma$.
    \begin{prop}[Lemma 1.26 of Shimura \cite{Shimura}]\label{prop-Shimura}
    	For every cusp $s$ of a Fuchsian group $\Gamma$, there exists a neighborhood $U\subset\tilde{\bH}^2$ of $s$ such that $Stab_\Gamma(s)=\{\gamma\in\Gamma\mid \gamma\cdot U\cap U\neq\emptyset\}$.
    \end{prop}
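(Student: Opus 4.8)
The plan is to reduce to the normalized case $s=\infty$ and then use Shimizu's lemma to control how far elements outside the stabilizer push horoballs downward. First I would conjugate $\Gamma$ in $\PSL_2(\R)$ so that the cusp $s$ becomes $\infty$; this loses no generality, since conjugation is an isometry of $\bH^2$ carrying cusps to cusps and neighborhoods in $\tilde{\bH}^2$ to neighborhoods. After this normalization, an element $\gamma=\begin{pmatrix}a&b\\c&d\end{pmatrix}\in\Gamma$ fixes $\infty$ exactly when $c=0$, so $\mathrm{Stab}_\Gamma(\infty)=\{\gamma\in\Gamma\mid c=0\}$. Because $\infty$ is a cusp, $\Gamma$ contains a parabolic fixing $\infty$, which after a further real scaling we may take to be $P=\begin{pmatrix}1&\tau\\0&1\end{pmatrix}$ for some $\tau>0$.

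For the candidate neighborhood I would take the horoball-with-cusp $U=U_h:=\{z\in\bH^2\mid \Im z>h\}\cup\{\infty\}$, where $h>0$ is fixed below; this is a genuine neighborhood of $\infty$ in $\tilde{\bH}^2$, and its only boundary point is $\infty$ itself. The easy inclusion is that every $\gamma\in\mathrm{Stab}_\Gamma(\infty)$ satisfies $\gamma U\cap U\neq\emptyset$: such a $\gamma$ fixes $\infty$ and acts by $z\mapsto a^2z+ab$, hence maps the horoball family at $\infty$ into itself, so in particular $\infty\in\gamma U\cap U$. The content is the reverse inclusion.

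For the reverse inclusion, suppose $\gamma\notin\mathrm{Stab}_\Gamma(\infty)$, so that $c\neq 0$. Here Shimizu's lemma enters: applied with the parabolic $P$, it yields the uniform lower bound $|c|\geq 1/\tau$ for every element of $\Gamma$ whose lower-left entry is nonzero. Then for any $z$ with $\Im z>h$ I would combine the identity $\Im(\gamma z)=\Im z/|cz+d|^2$ with $|cz+d|\geq |c|\,\Im z$ to obtain $\Im(\gamma z)\leq 1/(c^2\,\Im z)\leq \tau^2/\Im z<\tau^2/h$. Choosing $h>\tau$ makes $\tau^2/h<\tau<h$, so $\Im(\gamma z)<h$ for every $z\in U_h$, i.e.\ $\gamma U_h\cap U_h=\emptyset$; moreover $\gamma\cdot\infty=a/c$ is a finite real point and hence not in $U$. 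Therefore $\gamma U\cap U=\emptyset$, which completes the reverse inclusion and the proof.

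I expect the one substantive step to be the uniform estimate $|c|\geq 1/\tau$, which is exactly Shimizu's lemma and is precisely what allows a single $h$ to work simultaneously for all $\gamma$; everything after it is an elementary computation with imaginary parts. A minor point to get right is the bookkeeping at the cusp in the topology of $\tilde{\bH}^2$, together with the observation that hyperbolic elements fixing $\infty$ (which discreteness in fact rules out, but which we need not exclude) cause no trouble, since they still preserve the horoball family and so belong harmlessly to both sides of the asserted equality.
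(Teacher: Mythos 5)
Your proof is correct. Note that the paper itself gives no proof of this proposition: it is quoted as Lemma 1.26 of Shimura's book and used as a black box, with only the remark that it (together with Proposition \ref{prop-horocyclic region}) is a corollary of Shimizu's lemma. Your argument supplies exactly the standard proof behind that remark: normalize $s=\infty$, invoke Shimizu's lemma to get the uniform bound $|c|\ge 1/\tau$ for all $\gamma\in\Gamma$ with $c\neq 0$, and then the elementary estimate $\Im(\gamma z)=\Im z/|cz+d|^2\le 1/(c^2\Im z)\le \tau^2/\Im z$ shows that the horoball $\{\Im z>h\}$ with $h>\tau$ is precisely invariant under $\mathrm{Stab}_\Gamma(\infty)$. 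The computations check out ($|cz+d|\ge|c|\Im z$, and $\tau^2/h<h$ for $h>\tau$), the forward inclusion is trivial since elements with $c=0$ fix $\infty\in U$, and your aside about hyperbolic elements fixing $\infty$ is handled correctly (they are excluded by discreteness once a parabolic fixes $\infty$, but in any case they preserve the horoball family). So the proposal is a complete and correct proof, by essentially the only natural route, of a statement the paper outsources to the literature.
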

    \begin{prop}[Theorem 9.2.8 (ii) of \cite{Beardon}]\label{prop-horocyclic region}
	Let $D$ be any locally finite fundamental domain for a Fuchsian group $\Gamma$, $\gamma\in\Gamma$ be a parabolic element, and $U$ be a horocyclic region with $\gamma\cdot U=U$. Then $D$ meets some finitely many distinct translates $\gamma'\cdot U, \gamma'\in\Gamma$.
    \end{prop}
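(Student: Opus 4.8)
The plan is to normalise at the cusp and pass to the quotient cylinder, where the tessellation by translates of $D$ becomes available. Conjugating in $\PSL_2(\R)$, I may assume the parabolic fixes $\infty$, that $Stab_\Gamma(\infty)=\langle g\rangle$ with $g\colon z\mapsto z+1$ the primitive parabolic (so $\gamma=g^k$), and that $U=\{z\mid\mathrm{Im}(z)>c\}$; every horizontal translation preserves $U$, and conversely an element preserving the horoball must fix its centre and scale trivially, so $Stab_\Gamma(U)=\langle g\rangle$. Writing $\delta=(\gamma')^{-1}$, one has $\gamma'\cdot U\cap D\neq\emptyset\iff \delta\cdot D\cap U\neq\emptyset$, and $\gamma'\cdot U=\gamma''\cdot U\iff \langle g\rangle\delta=\langle g\rangle\delta''$. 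Hence the distinct translates of $U$ meeting $D$ are in bijection with the $\langle g\rangle$-orbits (under left multiplication) of tiles $\delta\cdot D$ meeting $U$. Passing to the cylinder $M:=\langle g\rangle\backslash\bH^2$ with covering projection $\pi$, these correspond exactly to the tiles of the induced tessellation $\{\pi(\delta\cdot D)\}$ that meet the cusp region $C:=\pi(U)$; this tessellation has pairwise disjoint interiors (indexed by the right cosets $\langle g\rangle\delta$) and is locally finite in $M$, since $\pi$ is a covering and $\{\delta\cdot D\}$ is locally finite in $\bH^2$.

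First I would record the input from Shimizu's lemma. Since $\Gamma$ is discrete and contains $g$, any $\eta=\begin{pmatrix}a&b\\p&q\end{pmatrix}\in\Gamma$ with $p\neq0$ has $|p|\ge1$, so each translate $\eta\cdot U$ based at the finite point $\eta\cdot\infty=a/p$ is a Euclidean disc of diameter $1/(cp^{2})\le 1/c$; thus every translate other than $U$ itself is confined to the collar $\{0<\mathrm{Im}\le 1/c\}$ near the real axis, which is the geometric reason the count ought to be finite. To make this rigorous I fix a height $c_0>\max(c,1/c)$ and split $C=\pi(\{c<\mathrm{Im}\le c_0\})\cup\pi(U_0)$, where $U_0:=\{\mathrm{Im}>c_0\}$. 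The lower piece is the image of a horizontal band of bounded height, hence a compact subset of $M$, so by local finiteness it meets only finitely many tiles. Everything therefore reduces to bounding the number of tiles meeting the upper cuspidal piece $C_0:=\pi(U_0)$.

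The cuspidal piece is the decisive and most delicate step, and I expect it to be the main obstacle: local finiteness of $\{\delta\cdot D\}$ is a statement about \emph{compact} subsets of $\bH^2$, whereas $C_0$ is a non-compact end of $M$, so finiteness must genuinely be transferred across the cusp. I would split the tiles meeting $U_0$ into two types: (a) those also meeting the horocycle $\{\mathrm{Im}=c_0\}$, and (b) those contained in $U_0$. For (a), $\{\mathrm{Im}=c_0\}/\langle g\rangle$ is a compact circle in $M$, so only finitely many tiles meet it. For (b), $\pi$ is injective and hence area-preserving on the interior of each tile, the cusp region $C_0\cong\langle g\rangle\backslash U_0$ has finite hyperbolic area $1/c_0$, and the tiles have disjoint interiors each of area equal to that of $D$; so only finitely many tiles can lie inside $C_0$. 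The dichotomy (a)/(b) is exactly where connectedness enters: for a \emph{connected} fundamental domain $D$ (in particular a Dirichlet domain, as used throughout the paper) a tile having points both above and below the level $\mathrm{Im}=c_0$ must meet the horocycle, so every tile falls into case (a) or (b). The control of the cusp stabiliser underlying this transfer is precisely the content of Proposition \ref{prop-Shimura}, itself a corollary of Shimizu's lemma; assembling the lower collar, case (a) and case (b) then shows that only finitely many distinct translates $\gamma'\cdot U$ meet $D$.
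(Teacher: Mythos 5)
The paper does not actually prove this proposition: it is quoted from Beardon (Theorem 9.2.8(ii)) with the remark that Beardon's proof rests on J\o{}rgensen's inequality, so your argument is a genuinely independent route, and it is correct in the setting it covers. The reduction of the count of distinct translates $\gamma'\cdot U$ meeting $D$ to the count of right cosets $\langle g\rangle\delta$ with $\delta\cdot D\cap U\neq\emptyset$, and then to the count of tiles of the induced locally finite tessellation of the cylinder $\langle g\rangle\backslash\bH^2$ meeting the cusp region, is clean; the three-way split (compact collar between heights $c$ and $c_0$, tiles meeting the compact horocycle circle at height $c_0$, tiles contained in the finite-area region above $c_0$) then closes the argument using only local finiteness and the disjoint-interiors/area bound. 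What this buys over the cited route is elementarity: no trace or commutator inequality is needed. In fact you use \emph{less} than you advertise: the Shimizu input $|p|\ge 1$, the resulting $1/c$ bound on the Euclidean diameters of the other horoballs, and the appeal to Proposition \ref{prop-Shimura} play no role in the rigorous part of your argument --- the requirement $c_0>1/c$ is never exploited downstream, and any $c_0>c$ works; the only facts about the cusp you actually need are that $Stab_\Gamma(\infty)$ is generated by a primitive parabolic (forced by discreteness once $\Gamma$ contains a parabolic fixing $\infty$) and that $\langle g\rangle\backslash\{\mathrm{Im}(z)>c_0\}$ has finite area. The one real caveat is the one you flag: the dichotomy between tiles meeting the horocycle at height $c_0$ and tiles contained above it requires each tile to be connected, so as literally stated for ``any locally finite fundamental domain'' your proof does not cover disconnected $D$. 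This is harmless for the paper, which applies the result only to (convex, hence connected) Dirichlet domains, and it matches Beardon's convention that a fundamental domain is a domain, i.e.\ open and connected; but if you want the statement verbatim for arbitrary locally finite fundamental domains you would need to handle tiles that meet the region above height $c_0$ in a set of measure zero without touching the separating horocycle.
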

    Here a \textit{horocyclic region} of a parabolic element $\gamma$ is a neighborhood of the fixed point of $\gamma$ congruent to $\{z\in\bH^2\mid Im(z)>t\}$ for some $t>0$. Beardon's proof uses J\o{}rgensen's inequality, which may be seen as a generalized version of Shimizu's lemma. 
    
    Now we modify the construction of (\ref{equation-construction of geodesic cover}) using a \textit{horocyclic surgery} on Dirichlet domains resorting to the above two results. Assume $\Gamma$ is a co-finite Fuchsian group without elliptic elements (with cusps otherwise co-compact), so that for any $z\in\bH^2$, $D_\Gamma(z)$ is a fundamental Dirichlet domain. Fix a point $z_0\in\bH^2$ and let $D:=D_\Gamma(z_0)$. Suppose $s_1,\dots,s_t$ are all the cusp vertices on $\partial D$. According to Proposition \ref{prop-Shimura} and \ref{prop-horocyclic region}, we may choose horocyclic neighborhood $U_i$ of each $s_i$ such that
    \begin{equation}\label{equation-horocyclic neighborhood}
    Stab_{\Gamma}(s_i)=\{\gamma\in\Gamma\mid \gamma\cdot U_i\cap U_i\neq\emptyset\}, \forall i=1,\dots,t,
    \end{equation}
    and $D$ meets only finitely many translates of $U_i$'s. Now define
    \begin{equation}\label{equation-trucation}
    \tilde{D}=D\smallsetminus\left(\bigcup_{i=1}^t U_i\right)
    \end{equation}
    to be the \textit{horocyclic truncation} of $D$ which becomes compact.
    
    For any $z\in\bH^2$, by Proposition \ref{prop-cusp}, each cusp of $D_\Gamma(z)$ is congruent to some $s_i$ in $\Gamma$. We may list its cusps as $s_1(z)=\gamma_{1,z}\cdot s_{i_1},\dots, s_{t_z}(z)=\gamma_{t_z,z}\cdot s_{i_{t_z}}$ for some $\gamma_{1,z},\dots,\gamma_{t_z,z}\in\Gamma$. Note that for each $j$, there are infinitely many $\gamma_{j,z}$ such that $s_j(z)=\gamma_{j,z}s_{i_j}$. But we can restrict the translations to a bounded number by requiring that $\gamma_{j,z}\cdot (D\cap U_{i_j})\subset D_{\gamma}(z)$. Similar to (\ref{equation-horocyclic neighborhood}),
    \[Stab_{\Gamma}(s_j(z))=\gamma_{j,z}Stab_{\Gamma}(s_{i_j})\gamma_{j,z}^{-1}=\{\gamma\in\Gamma\mid \gamma\cdot (\gamma_{j,z}\cdot U_{i_j})\cap (\gamma_{j,z}\cdot U_{i_j})\neq\emptyset\},\]
    and similar to (\ref{equation-trucation}) we define  the horocyclic truncation
    \[\tilde{D}_\Gamma(z)=D_\Gamma(z)\smallsetminus\left(\bigcup_{j=1}^{t_z} \gamma_{j,z}\cdot U_{i_j}\right).\]
    After the above surgery we modify our previous construction of (\ref{equation-construction of geodesic cover}) by
    \begin{equation}\label{equation-construction after surgery}
    \tilde{B}_\Gamma(z_0):=\bigcup_{z\in\tilde{D}}\tilde{D}_\Gamma(z),\ 
    \tilde{U}_\Gamma(z_0):=\{\gamma\in\Gamma\mid \gamma\cdot D\cap \tilde{B}_\Gamma(z_0)\neq\emptyset\}.
    \end{equation}
    The following two facts account for the significance of our construction.
    \begin{lem}\label{lem-compactness}
    	With the above notations, $\tilde{B}_\Gamma(z_0)$ is compact.
    \end{lem}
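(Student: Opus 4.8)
The plan is to deduce compactness of $\tilde B_\Gamma(z_0)$ from boundedness together with closedness: since $\bH^2$ is a proper metric space, a closed and bounded subset is compact. The heart of the matter is the uniform boundedness of the truncated cells $\tilde D_\Gamma(z)$ as $z$ ranges over the compact set $\tilde D$, and for this I would pass to the surface $S_\Gamma=\Gamma\backslash\bH^2$ via the projection $\pi\colon\bH^2\to S_\Gamma$. The key elementary observation is that for $w\in D_\Gamma(z)$ the Dirichlet property gives $d_{\bH^2}(z,w)=\min_{\gamma\in\Gamma}d_{\bH^2}(z,\gamma\cdot w)=d_{S_\Gamma}(\pi(z),\pi(w))$, so the hyperbolic distance from any point of the cell to its centre equals the surface distance between the corresponding points.

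First I would fix, once and for all, standard embedded and pairwise disjoint cusp neighbourhoods $V_i=\pi(U_i)$ of the finitely many cusps of $S_\Gamma$, where the $U_i$ are the horocyclic neighbourhoods selected in (\ref{equation-horocyclic neighborhood}) via Propositions \ref{prop-Shimura} and \ref{prop-horocyclic region}. Because $S_\Gamma$ has finite area (the group is co-finite) and the $V_i$ are genuine cusp neighbourhoods, the truncated surface $\tilde S_\Gamma:=S_\Gamma\smallsetminus\bigcup_i V_i$ is compact; write $\delta:=\diam(\tilde S_\Gamma)<\infty$. The crucial claim, which I expect to be the main obstacle, is that the horocyclic truncation removes exactly the cuspidal part of each cell, i.e.\ $\tilde D_\Gamma(z)\subset\pi^{-1}(\tilde S_\Gamma)$ for every $z\in\tilde D$.

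To prove this claim I would argue as follows. Since each $U_i$ is invariant under its own parabolic stabiliser $\Stab_\Gamma(s_i)$, every cusp $s_j(z)=\gamma_{j,z}\cdot s_{i_j}$ of $D_\Gamma(z)$ carries a single well-defined horoball $\gamma_{j,z}\cdot U_{i_j}$, independent of the choice of representative $\gamma_{j,z}$; these are precisely the horoballs removed in the definition of $\tilde D_\Gamma(z)$. By Proposition \ref{prop-cusp} every cusp of $D_\Gamma(z)$ is congruent to some $s_i$, and, the $V_i$ being embedded and disjoint, the intersection $D_\Gamma(z)\cap\pi^{-1}(V_i)$ maps bijectively onto $V_i$ and decomposes into the pieces lying in the horoballs at the cusps of $D_\Gamma(z)$ congruent to $s_i$. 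Hence any point of $D_\Gamma(z)$ whose image lies in some $V_i$ already belongs to one of the removed horoballs, which is exactly the assertion $\tilde D_\Gamma(z)\subset\pi^{-1}(\tilde S_\Gamma)$. The delicate point is to ensure, using the separation guaranteed by Shimizu's lemma, that no \emph{extraneous} translate $\gamma\cdot U_i$ based at a cusp which is not a vertex of $D_\Gamma(z)$ dips into the cell; this is where the depth of the horocyclic neighbourhoods in (\ref{equation-horocyclic neighborhood}) is used.

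Granting the claim, boundedness is immediate: choosing $R_0:=\max_{z\in\tilde D}d_{\bH^2}(z_0,z)<\infty$ by compactness of $\tilde D$, any $w\in\tilde D_\Gamma(z)$ with $z\in\tilde D$ satisfies $\pi(w)\in\tilde S_\Gamma$, whence $d_{\bH^2}(z,w)=d_{S_\Gamma}(\pi(z),\pi(w))\le\delta$ and therefore $d_{\bH^2}(z_0,w)\le R_0+\delta$ by the triangle inequality; thus $\tilde B_\Gamma(z_0)$ lies in the closed ball of radius $R_0+\delta$ about $z_0$. For closedness I would take a sequence $w_n\in\tilde D_\Gamma(z_n)$ with $z_n\in\tilde D$ and $w_n\to w$, extract a convergent subsequence $z_n\to z_*\in\tilde D$ by compactness, and verify $w\in\tilde D_\Gamma(z_*)$: the containment $w\in D_\Gamma(z_*)$ passes to the limit in the defining inequalities $d_{\bH^2}(z_n,w_n)\le d_{\bH^2}(z_n,\gamma\cdot w_n)$, while discreteness of $\Gamma$ makes the matching translates $\gamma_{j,z_n}$ eventually constant, so that the removed open horoballs converge and $w$ avoids them. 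Being closed and bounded in the proper space $\bH^2$, $\tilde B_\Gamma(z_0)$ is compact.
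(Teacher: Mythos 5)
Your architecture is genuinely different from the paper's: you want closed\,$+$\,bounded in the proper space $\bH^2$, with boundedness obtained by projecting to the quotient surface, using the identity $d_{\bH^2}(z,w)=d_{S_\Gamma}(\pi(z),\pi(w))$ for $w\in D_\Gamma(z)$ and the compactness of the truncated surface $\tilde S_\Gamma$. The paper instead stays upstairs: it shows the set of matching translates $\{\gamma_{j,z}\mid z\in\tilde D,\ j\le t_z\}$ is finite by local finiteness around the compact set $\tilde D$, and deduces uniform boundedness of the truncated cells directly. Your route would be cleaner if it worked, but it hinges on a claim that the construction does not supply.

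The gap is exactly the step you flag as ``delicate''. The inclusion $\tilde D_\Gamma(z)\subset\pi^{-1}(\tilde S_\Gamma)$ amounts to: whenever a translate $\gamma\cdot U_i$ meets $D_\Gamma(z)$, its base point $\gamma\cdot s_i$ is an ideal vertex of $D_\Gamma(z)$, so that $\gamma\cdot U_i$ is among the removed horoballs. Nothing guarantees this. Condition (\ref{equation-horocyclic neighborhood}) gives only precise invariance of $U_i$ (distinct translates are disjoint), hence that $V_i=\pi(U_i)$ is embedded; it constrains translates of $U_i$ relative to each other, not relative to the Dirichlet cell of an arbitrary $z\in\tilde D$. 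And Proposition \ref{prop-horocyclic region} asserts only that a locally finite fundamental domain meets \emph{finitely many} translates of $U$ --- a statement that would be vacuous (immediate from finite-sidedness) if every translate it met were based at one of its ideal vertices. In general $D_\Gamma(z)$ does clip translates $\gamma\cdot U_i$ whose base points are not among its cusps: near the bounding horocycle the cell's intersection with $\pi^{-1}(V_i)$ is not exhausted by its corners at its own ideal vertices. Such a point $w$ survives the truncation (only the horoballs at the cusps of $D_\Gamma(z)$ are removed) yet has $\pi(w)\in V_i$, so your bound $d_{S_\Gamma}(\pi(z),\pi(w))\le\diam(\tilde S_\Gamma)$ does not apply to it. The lemma is not thereby false --- when $\gamma\cdot s_i$ is not an ideal vertex of $D_\Gamma(z)$ the intersection $D_\Gamma(z)\cap\gamma\cdot U_i$ is itself compact, and Proposition \ref{prop-horocyclic region} plus compactness of $\tilde D$ bounds these extraneous pieces uniformly --- but that is a different argument (essentially the paper's local-finiteness one), and it is precisely the step your proof still owes. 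Secondarily, the closedness argument is optimistic: the combinatorics of $D_\Gamma(z_n)$ and the list of matching translates $\gamma_{j,z_n}$ need not stabilize along a convergent sequence $z_n\to z_*$; but the unproved inclusion above is the essential defect.
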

    \begin{proof}
    	For each $z\in\tilde{D}$ and each cusp $s_j(z)$ of $D_\Gamma(z)$, we have the intersection of two closed sets $\gamma_{j,z}\cdot D\cap D_\Gamma(z)$ contains some horocyclic region $\gamma_{j,z}\cdot U_{i_j}$. Then  $\gamma_{j,z}^{-1}\cdot D\cap\tilde{D}\neq\emptyset$. Since $\tilde{D}$ is compact, the set $\{\gamma_{j,z}\mid z\in\tilde{D}, j\leq t_z\}$ is finite by local finiteness. Thus $\tilde{D}_\Gamma(z)$ is uniformly bounded for $z\in\tilde{D}$ and their union, i.e. $\tilde{B}_\Gamma(z_0)$ is compact.
    \end{proof}
    \begin{lem}\label{lem-modified construction}
    	With previous notations, $U_\Gamma(z_0)\smallsetminus\tilde{U}_\Gamma(z_0)$ is a finite set. 
    \end{lem}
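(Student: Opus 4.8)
The plan is to read off finiteness of $U_\Gamma(z_0)\smallsetminus\tilde U_\Gamma(z_0)$ from the compactness already established in Lemma \ref{lem-compactness}, which makes $\tilde U_\Gamma(z_0)$ finite by local finiteness of the tessellation, combined with a separate treatment of the elements that touch $B_\Gamma(z_0)$ only through the cuspidal pieces removed by the horocyclic surgery. First I would unwind the definitions. Since $\tilde D\subseteq D$ and $\tilde D_\Gamma(z)\subseteq D_\Gamma(z)$, we have $\tilde B_\Gamma(z_0)\subseteq B_\Gamma(z_0)$, so membership of $\gamma$ in $U_\Gamma(z_0)\smallsetminus\tilde U_\Gamma(z_0)$ is equivalent to $\gamma\cdot D\cap B_\Gamma(z_0)\neq\emptyset$ together with $\gamma\cdot D\cap\tilde B_\Gamma(z_0)=\emptyset$; equivalently, $\gamma\cdot D$ meets the difference $B_\Gamma(z_0)\smallsetminus\tilde B_\Gamma(z_0)$ while missing the compact core $\tilde B_\Gamma(z_0)$. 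Once the difference set is shown finite, $U_\Gamma(z_0)=\tilde U_\Gamma(z_0)\cup\bigl(U_\Gamma(z_0)\smallsetminus\tilde U_\Gamma(z_0)\bigr)$ is finite, which is the payoff.

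The second step is to confine $B_\Gamma(z_0)\smallsetminus\tilde B_\Gamma(z_0)$ to a finite union of horocyclic regions $\mathcal{H}=\bigcup_{k=1}^m g_k\cdot U_{i_k}$. This difference has exactly two sources. The first is the horoballs $\gamma_{j,z}\cdot U_{i_j}$ deleted in passing from $D_\Gamma(z)$ to $\tilde D_\Gamma(z)$ for $z\in\tilde D$; these involve only the finitely many translations already shown to occur in the proof of Lemma \ref{lem-compactness}. The second is the domains $D_\Gamma(z)$ with center $z\in D\smallsetminus\tilde D=\bigcup_i(D\cap U_i)$ lying high in a cusp, whose part outside the core is again trapped in a horoball at $s_i$ or at one of its finitely many neighbouring cusps. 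In both cases local finiteness bounds the cusps that domains centered in $D$ can reach, so $\mathcal{H}$ is genuinely a finite union.

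The third step converts ``$\gamma\cdot D$ meets $\mathcal{H}$'' into a finiteness statement modulo the parabolic stabilizers. Fixing one horoball $H_k=g_k\cdot U_{i_k}$ at the cusp $c_k=g_k\cdot s_{i_k}$ and a point of $\gamma\cdot D\cap H_k$, one gets $g_k^{-1}\gamma\cdot D\cap U_{i_k}\neq\emptyset$, so Proposition \ref{prop-horocyclic region} places $g_k^{-1}\gamma$ in one of finitely many cosets of $Stab_\Gamma(s_{i_k})$. Conjugating so that $c_k=\infty$ and $Stab_\Gamma(c_k)$ is generated by a horizontal translation, each such coset yields a one-parameter family: a single fixed fundamental domain translated horizontally by integer multiples of the parabolic's translation length.

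The main obstacle, and the last step, is to cut each such parabolic family down to finitely many members, since both $\gamma\cdot D\cap H_k\neq\emptyset$ and $\gamma\cdot D\cap\tilde B_\Gamma(z_0)=\emptyset$ survive under arbitrarily large horizontal shifts; it is the condition $\gamma\cdot D\cap B_\Gamma(z_0)\neq\emptyset$ that must do the work. Here I would use the elementary symmetry $w\in D_\Gamma(z)\Leftrightarrow z\in D_\Gamma(w)$ coming from $d_{\bH^2}(w,\gamma\cdot z)=d_{\bH^2}(\gamma^{-1}\cdot w,z)$, which identifies $B_\Gamma(z_0)=\{w\mid D_\Gamma(w)\cap D\neq\emptyset\}$; for $w$ high in the cusp $c_k$, the domain $D_\Gamma(w)$ is a vertical strip of width equal to the parabolic translation length whose floor stays at bounded height, so it meets the fixed domain $D$ only when its real part lies in a bounded range. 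Thus $B_\Gamma(z_0)$ enters each cusp only as a strip of bounded horizontal width, and shifting the fixed domain far enough pushes both its cuspidal strip and its body out of this strip, leaving only finitely many shifts with $\gamma\cdot D\cap B_\Gamma(z_0)\neq\emptyset$. Summing the finitely many surviving shifts over the finitely many cosets and the finitely many horoballs $H_k$ gives finiteness of $U_\Gamma(z_0)\smallsetminus\tilde U_\Gamma(z_0)$. This is precisely where Shimizu's lemma, through the stabilizer description \eqref{equation-horocyclic neighborhood} and Proposition \ref{prop-horocyclic region}, is essential: it guarantees that the parabolic stabilizer is the sole mechanism producing infinitely many returning translates, and the bounded-width observation neutralizes exactly that mechanism. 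The delicate point to verify carefully will be the uniform bound on the floor height of $D_\Gamma(w)$ as $w$ tends to the cusp, which underlies the bounded-width claim.
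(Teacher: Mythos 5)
Your proposal is correct and shares its skeleton with the paper's proof: both reduce membership in $U_\Gamma(z_0)\smallsetminus\tilde U_\Gamma(z_0)$ to the condition that $\gamma\cdot D$ meets one of the finitely many horoball translates $\gamma_{j,z}\cdot U_{i_j}$ produced in the proof of Lemma \ref{lem-compactness}, and both then invoke Proposition \ref{prop-horocyclic region}. The difference lies in how the last step is closed. The paper asserts in a single sentence that Proposition \ref{prop-horocyclic region}, together with the restriction imposed on the $\gamma_{j,z}$'s, leaves only finitely many possible values of $\gamma^{-1}\gamma_{j,z}$, hence finitely many $\gamma=\gamma_{j,z}\gamma_k^{-1}$. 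You correctly observe that Proposition \ref{prop-horocyclic region} only bounds the number of \emph{distinct translates} of a horoball meeting $D$, i.e.\ the number of cosets of the relevant parabolic stabilizer, each of which is infinite; you therefore add a fourth step, using the symmetry $w\in D_\Gamma(z)\Leftrightarrow z\in D_\Gamma(w)$ to show that $B_\Gamma(z_0)$ occupies only a strip of bounded horizontal width in each cusp, which eliminates the residual parabolic ambiguity. This is genuine added value: it makes explicit the geometric input that the paper's appeal to ``our restriction on the $\gamma_{j,z}$'s'' leaves implicit (that restriction constrains the labels $\gamma_{j,z}$, not $\gamma$ itself, so by itself it does not cut a coset down to finitely many elements). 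Two smaller remarks: the horizontal confinement of $D_\Gamma(w)$ to a strip of width equal to the parabolic translation length already follows from the bisectors between $w$ and $w\pm\lambda$, so the uniform ``floor height'' of $D_\Gamma(w)$ that you flag as the delicate point is not actually needed for the bounded-width claim; and your separate treatment of centers $z\in D\smallsetminus\tilde D$ in your second step fills a case that the paper's proof passes over silently.
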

    \begin{proof}
    	Clearly $\tilde{U}_\Gamma(z_0)\subset U_\Gamma(z_0)$ by definition. Suppose $\gamma\in U_\Gamma(z_0)$. Then $\gamma\cdot D\cap D_\Gamma(z)\neq\emptyset$ for some $z\in D$. If $\gamma\notin \tilde{U}_\Gamma(z_0)$, then $\gamma\cdot D\cap\gamma_{j,z} U_{i_j}$ for some horocyclic region $U_{i_j}$ of $D$, or equivalently  $D\cap\gamma^{-1}\gamma_{j,z}U_{i_j}\neq\emptyset$. By Proposition \ref{prop-horocyclic region} and our restriction on the $\gamma_{j,z}$'s, there are finitely many elements, say $\gamma_1,\dots,\gamma_n\in\Gamma$, such that $\gamma^{-1}\gamma_{j,z}=\gamma_k$ for some $k$. Then $\gamma\in\{\gamma_{j,z}\gamma_k^{-1}\mid z\in D, j\leq t_z, k\leq n\}$, a finite set by the last proof. 
    \end{proof}
    To eliminate the restriction of having no elliptic elements, we resort to
    \begin{prop}[Nielsen-Fenchel-Fox]\label{prop-Fox}
    	Any finitely generated Fuchsian group contains a subgroup of finite index without elliptic elements.   	
    \end{prop}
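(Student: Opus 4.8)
The plan is to reduce the assertion to the statement that a finitely generated subgroup of $\SL_2(\R)$ admits a torsion-free subgroup of finite index, and then to prove the latter by a congruence construction in the spirit of Selberg's lemma. The first observation I would record is that in a Fuchsian group the elliptic elements are exactly the elements of finite order: a parabolic or hyperbolic element has infinite order, while the stabilizer in $\Gamma$ of an elliptic fixed point is a finite cyclic group by discreteness. Thus ``no elliptic elements'' is the same as ``torsion-free,'' and it suffices to produce a finite-index torsion-free subgroup. Moreover, by Proposition \ref{prop-elliptic cycles} the elliptic conjugacy classes correspond to the finitely many elliptic cycles of a convex fundamental domain, so the orders of elliptic elements are bounded by some integer $M$; this bound is what will let me dispose of the small-prime nuisance below.

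Next I would lift $\Gamma$ to its full preimage $\hat\Gamma\leq\SL_2(\R)$ under $\SL_2(\R)\to\PSL_2(\R)$, which is again finitely generated (by lifts of a finite generating set together with $-I$). Let $R\subset\R$ be the subring generated by the entries of a finite set of generators of $\hat\Gamma$; then $R$ is a finitely generated domain over $\Z$ and all of $\hat\Gamma$ lies in $\SL_2(R)$. The key number-theoretic input I would invoke is that for all but finitely many primes $p$ there is a maximal ideal $\mathfrak m\subset R$ with residue field $k=R/\mathfrak m$ finite of characteristic $p$ (a standard consequence of the Nullstellensatz over $\Z$). I would fix such a $p$ with $p>2M$ and form the principal congruence subgroup
\[\hat\Gamma(\mathfrak m):=\ker\bigl(\hat\Gamma\to\SL_2(k)\bigr).\]
Since $k$ is finite this is a subgroup of finite index in $\hat\Gamma$.

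It remains to check that $\hat\Gamma(\mathfrak m)$ is torsion-free and projects isomorphically to a subgroup of $\Gamma$. If $g\in\hat\Gamma(\mathfrak m)$ has finite order, its eigenvalues are roots of unity, and because $g\equiv I\pmod{\mathfrak m}$ these eigenvalues are congruent to $1$ modulo a prime of $\Z[\zeta]$ above $p$. As the order of $g$ divides $2M<p$ it is prime to $p$, so such a root of unity must equal $1$; hence $g$ is unipotent of finite order and therefore $g=I$ in characteristic zero. In particular $-I\notin\hat\Gamma(\mathfrak m)$ (as $p$ is odd), so the projection $\SL_2\to\PSL_2$ is injective on $\hat\Gamma(\mathfrak m)$ and carries it to a finite-index subgroup of $\Gamma$ with no elements of finite order, i.e. no elliptic elements. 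The main obstacle is precisely this last arithmetic step: one must guarantee the existence of a residue field of large enough characteristic and rule out torsion in the kernel, and it is the \emph{a priori} bound $M$ on elliptic orders supplied by Proposition \ref{prop-elliptic cycles} that makes the choice $p>2M$ legitimate and the whole argument uniform. Everything else — finite generation of $\hat\Gamma$, finiteness of the index, and the translation between ``torsion-free'' and ``elliptic-free'' — is routine.
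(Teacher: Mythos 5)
Your argument is correct, but it is not the route the paper takes: the paper offers no proof at all, simply quoting the statement as Fenchel's conjecture, proved by Fox and repaired by Chau, whose approach is combinatorial (via the structure theory of $F$-groups) rather than arithmetic. What you have written is essentially Selberg's lemma specialized to $\SL_2$, and the key reduction --- elliptic $=$ torsion in a Fuchsian group, so it suffices to find a torsion-free finite-index subgroup --- together with the congruence-kernel construction $\hat\Gamma(\mathfrak{m})=\ker(\hat\Gamma\to\SL_2(R/\mathfrak{m}))$ is sound; the eigenvalue argument (a root of unity of order prime to $p$ that is $\equiv 1$ modulo a prime above $p$ must equal $1$, whence the element is unipotent and hence trivial in characteristic zero) is the standard and correct way to kill torsion in the kernel, and your handling of $-I$ and of the factor $2$ in passing from $\PSL_2$ to $\SL_2$ is right. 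Two remarks on the comparison. First, your use of the geometric bound $M$ on elliptic orders (via the finitely many elliptic cycles of a finite-sided Dirichlet domain) is a legitimate shortcut, but it is not actually needed: the fraction field of the finitely generated domain $R$ contains only finitely many roots of unity, which bounds torsion orders in $\SL_2(R)$ purely arithmetically; this is why Selberg's lemma holds for arbitrary finitely generated linear groups in characteristic zero, with no geometric input. Second, your construction gives slightly more than the paper's citation: $\hat\Gamma(\mathfrak{m})$ is normal in $\hat\Gamma$, so its image is normal in $\Gamma$, which recovers for free the strengthening that the paper attributes to a separate appeal to Poincar\'e's lemma. The only steps you leave as black boxes --- the existence of maximal ideals of $R$ with finite residue field of all but finitely many characteristics, and the finiteness of the residue fields themselves --- are indeed standard consequences of the Nullstellensatz over $\Z$, so the proof is complete as a sketch.
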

    This was the Fenchel's conjecture proved by Fox \cite{Fox}, the proof of which contains an error later fixed by Chau \cite{Chau}. Note that further by Poincar\'{e}'s lemma in group theory, there exists a normal subgroup of finite index without elliptic elements although we do not need this latter fact.
    
    Together with Proposition \ref{prop-finite index} we are ready to prove the following
    \begin{thm}\label{thm-co-finite}
    	Any co-finite Fuchsian group has finite geodesic covers.
    \end{thm}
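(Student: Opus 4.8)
The plan is to first reduce to the elliptic-free case and then establish finiteness of the geodesic cover produced by the horocyclic surgery. Given an arbitrary co-finite Fuchsian group $\Gamma$, it is finitely generated, so Proposition \ref{prop-Fox} furnishes a finite index subgroup $H\leq\Gamma$ containing no elliptic elements. Since $[\Gamma:H]<\infty$, a fundamental domain of $H$ is a union of finitely many $\Gamma$-translates of a fundamental domain of $\Gamma$, hence has finite hyperbolic area; thus $H$ is again co-finite. In view of Proposition \ref{prop-finite index}, it therefore suffices to prove the theorem for co-finite groups without elliptic elements, to which the surgery construction of (\ref{equation-construction after surgery}) applies.

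So I would now assume $\Gamma$ is co-finite with no elliptic elements, fix $z_0\in\bH^2$, and set $D:=D_\Gamma(z_0)$. By Proposition \ref{prop-construction of geodesic cover}, the set $U_\Gamma(z_0)$ of (\ref{equation-construction of geodesic cover}) is a geodesic cover of $\Gamma$ corresponding to $D$, so it is enough to show that $U_\Gamma(z_0)$ is finite. Lemma \ref{lem-modified construction} already tells us that $U_\Gamma(z_0)\smallsetminus\tilde{U}_\Gamma(z_0)$ is finite, so the whole question reduces to the finiteness of the truncated set $\tilde{U}_\Gamma(z_0)=\{\gamma\in\Gamma\mid\gamma\cdot D\cap\tilde{B}_\Gamma(z_0)\neq\emptyset\}$.

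For this last point I would invoke Lemma \ref{lem-compactness}, which says $\tilde{B}_\Gamma(z_0)$ is compact. By local finiteness of the Dirichlet tessellation (Theorem 3.5.1 of \cite{Katok}), only finitely many translates $\gamma\cdot D$ can meet a fixed compact set, so $\tilde{U}_\Gamma(z_0)$ is finite. Combining with Lemma \ref{lem-modified construction} gives that $U_\Gamma(z_0)$ is finite, which proves the theorem for elliptic-free co-finite groups and, through the reduction above, for all co-finite groups.

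The genuine difficulty is not in this assembly but is already absorbed into the two preparatory lemmas: the real content is Lemma \ref{lem-compactness}, where the horocyclic surgery together with the Shimizu/J\o{}rgensen-type control of Propositions \ref{prop-Shimura} and \ref{prop-horocyclic region} is used to cut off the cusps and render $\tilde{B}_\Gamma(z_0)$ compact. Once compactness is available, the passage from local finiteness to finiteness of $\tilde{U}_\Gamma(z_0)$ is routine. The one place to be careful is verifying that the finite index subgroup supplied by Nielsen-Fenchel-Fox inherits co-finiteness, and that the surgery construction is applied to that subgroup rather than to $\Gamma$ itself, since the horocyclic neighbourhoods in (\ref{equation-horocyclic neighborhood}) were set up under the standing assumption that no elliptic elements are present.
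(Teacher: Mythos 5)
Your proposal is correct and follows essentially the same route as the paper: reduce via Nielsen--Fenchel--Fox (Proposition \ref{prop-Fox}) and Proposition \ref{prop-finite index} to an elliptic-free co-finite subgroup, then deduce finiteness of $U_{\Gamma}(z_0)$ from the compactness of $\tilde{B}_\Gamma(z_0)$ (Lemma \ref{lem-compactness}), local finiteness, and Lemma \ref{lem-modified construction}. Your explicit check that the finite index subgroup inherits co-finiteness is a small point the paper leaves implicit, but otherwise the two arguments coincide.
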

    \begin{proof}
	By Proposition \ref{prop-Fox}, choose a finite index subgroup $\Gamma'\leq\Gamma$ which contains no elliptic elements. Fix any $z_0\in\bH^2$, then $\tilde{B}_{\Gamma'}(z_0)$ is compact by Lemma \ref{lem-compactness}, which implies that $\tilde{U}_{\Gamma'}(z_0)$ is finite by local finiteness. Further by Lemma \ref{lem-modified construction} and Proposition \ref{prop-construction of geodesic cover}, $\tilde{U}_{\Gamma'}(z_0)$ is finite geodesic cover of $\Gamma'$. Finally, by Proposition \ref{prop-finite index}, $\Gamma$ has a finite geodesic of size $\leq[\Gamma:\Gamma']|\tilde{U}_{\Gamma'}(z_0)|$.
    \end{proof}
    \subsection{Geodesic cover of geometrically finite groups of second kind}
    For the remaining case where $\Gamma$ is a geometrically finite Fuchsian group of second kind, we modify our construction through a bisection using Nielsen region $N_\Gamma$ introduced as (\ref{equation-Nielsen region}). 
    
    We assume $\Gamma$ is non-elementary with no elliptic elements. Then for each $z\in\bH^2$, the convex fundamental Dirichlet domain $D_\Gamma(z)$ intersects $N_\Gamma$ at a polygon with finitely many sides in $\bH^2$. Moreover, each free side of $D_\Gamma(z)$ is contained in an open interval in $\hat{\R}\smallsetminus\Lambda(\Gamma)$.Write $\hat{D}_\Gamma(z):=D_\Gamma(z)\cap N_\Gamma$. Now for a point $z_0\in\bH^2$, form the set $\hat{B}_\Gamma(z_0):=\cup_{z\in D}\hat{D}_\Gamma(z)$ and construct
    \[\hat{U}_\Gamma(z_0):=\{\gamma\in\Gamma\mid \gamma\cdot D\cap \hat{B}_\Gamma(z_0)\neq\emptyset\}.\]
    Since each $\hat{D}_\Gamma(z)$ is a finite-sided polygon possible with cusps of $\Gamma$, we may perform the same horocyclic surgery as in the last subsection on $\hat{D}_\Gamma(z)$. 
    
    Similar to the proof of Lemma \ref{lem-compactness}, $\hat{B}_\Gamma(z_0)$ has finitely many cusps hence $B_\Gamma(z_0)$ has finitely many free sides, say contained in the intervals $I_1,\dots,I_n\subset\hat{R}\smallsetminus\Lambda(\Gamma)$. Also, similarly by Lemma \ref{lem-compactness} $\hat{U}_\Gamma(z_0)$ is finite. For any $\gamma\in U_{\Gamma}(z_0)\smallsetminus\hat{U}_\Gamma(z_0)$, $\gamma\cdot D$ intersects some of the interval $I_j$. Then it has a free side contained in $I_j$ and must intersect a \textit{hypercyclic region} $U_j$ having same end points (fixed points of some hyperbolic elements of $\Gamma$) with $I_j$. Here a hypercyclic region is congruent to a set of the form $\{z\in\bH^2\mid |\arg(z)-\pi/2|<\theta\}$. However, such $\gamma$ if restricted to having intersection with a Dirichlet region contained in $B_\Gamma(z_0)$ with a free side in $I_j$, amount to be finite due to
    \begin{prop}[Theorem 9.8.2 (iii) of \cite{Beardon}]\label{prop-hypercyclic region}
    	Let $D$ be any locally finite fundamental domain for $\Gamma$ and $U$ be a hypercyclic region. Then $D$ meets some finitely many translates $\gamma\cdot U, \gamma\in\Gamma$.    	
    \end{prop}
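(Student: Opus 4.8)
The plan is to imitate the horocyclic argument of Proposition~\ref{prop-horocyclic region}: although a hypercyclic region is non-compact, it admits a compact fundamental region under a cyclic subgroup of $\Gamma$, after which local finiteness of $D$ bounds the number of translates met. Recall that $U$ is congruent to $\{z\in\bH^2\mid |\arg(z)-\pi/2|<\theta\}$ with $0<\theta<\pi/2$, a hyperbolic neighbourhood of the geodesic $\ell$ joining the two endpoints of $U$ on $\hat{\R}$. In the setting at hand these endpoints are the fixed points of a hyperbolic element $g\in\Gamma$ whose axis is $\ell$, so after a conjugation in $\PSL_2(\R)$ we may assume $\ell$ is the imaginary axis and $g\colon z\mapsto\lambda^2 z$ for some $\lambda>1$; then $g\cdot U=U$ and $\langle g\rangle$ acts on $U$ by translation along $\ell$.

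First I would pass to the compact slab $\overline{K}:=\{z\in\bH^2\mid 1\le|z|\le\lambda^2,\ |\arg(z)-\pi/2|\le\theta\}$. This lies in $\bH^2$, being bounded away from $\hat{\R}$ since $\mathrm{Im}(z)\ge\cos\theta>0$ on it, and it satisfies $U=\bigcup_{n\in\Z}g^n\cdot (U\cap\{1\le|z|\le\lambda^2\})\subset\bigcup_{n\in\Z}g^n\cdot\overline{K}$. Because $D$ is locally finite it meets only finitely many translates of the compact set $\overline{K}$, say $D\cap\gamma_i\cdot\overline{K}\neq\emptyset$ exactly for $\gamma_1,\dots,\gamma_m$.

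Next I would deduce the statement for $U$ itself. If $D\cap\gamma\cdot U\neq\emptyset$, then since $\gamma\cdot U\subset\bigcup_n\gamma g^n\cdot\overline{K}$ some translate $\gamma g^n\cdot\overline{K}$ meets $D$, forcing $\gamma g^n=\gamma_i$ for some $i$; hence $\gamma\cdot U=\gamma_i g^{-n}\cdot U=\gamma_i\cdot U$ because $g^{-n}$ stabilises $U$. Thus every translate of $U$ meeting $D$ occurs among the finitely many sets $\gamma_1\cdot U,\dots,\gamma_m\cdot U$, which is the claim.

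The only geometric input beyond local finiteness is the compactness of the collar $\overline{K}$, which is immediate from $\theta<\pi/2$. The step requiring the most care is the coset bookkeeping of the last paragraph, where one verifies that passing from translates of $\overline{K}$ to translates of $U$ collapses each coset $\gamma\langle g\rangle$ to a single translate $\gamma\cdot U$; a larger stabiliser of $\ell$ in $\Gamma$, or a non-primitive choice of $g$, only decreases the final count and causes no difficulty. The general form of the statement, in which the endpoints of $U$ need not be fixed by a hyperbolic element of $\Gamma$, is handled in \cite{Beardon} and is not needed here, since the intervals $I_j$ arising from $\hat{\R}\smallsetminus\Lambda(\Gamma)$ always supply such an element.
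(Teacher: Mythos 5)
Your argument is correct. Since the paper states this proposition purely as a citation of Beardon and supplies no proof of its own, your write-up is effectively a self-contained replacement for that reference, and the mechanism you use --- a compact fundamental slab $\overline{K}$ for the action of $\langle g\rangle$ on $U$, local finiteness applied to $\overline{K}$, and the coset collapse sending $\gamma\langle g\rangle$ to the single translate $\gamma\cdot U$ --- is the standard route and is essentially what Beardon does in the hyperbolic case (the parabolic analogue, Proposition~\ref{prop-horocyclic region}, is where J{\o}rgensen's inequality enters; here the compact slab makes that unnecessary). The one substantive point worth recording is that your proof uses the hypothesis that $U$ is invariant under a hyperbolic element $g\in\Gamma$, which the statement as quoted in the paper omits but which is genuinely needed: for an arbitrary hypercyclic region the claim is false. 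For instance, take $\Gamma=\PSL_2(\Z)$, $U=\{z\in\bH^2\mid |\arg(z)-\pi/2|<\theta\}$, and the standard fundamental domain $D$; the translates of $U$ under $z\mapsto z+n$ are pairwise distinct cones based at the integers, and each contains $iy\in D$ once $y>n\cot\theta$, so $D$ meets infinitely many of them (and indeed no hyperbolic element of $\PSL_2(\Z)$ fixes both $0$ and $\infty$). Your closing remark that the hypothesis is available in the application is also right: for a non-elementary finitely generated Fuchsian group of the second kind, each boundary geodesic $L_j$ of the Nielsen region projects to a closed boundary geodesic of the convex core, hence is the axis of a hyperbolic element of $\Gamma$. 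So your restriction loses nothing where the proposition is used, and it quietly repairs a small imprecision in the statement as reproduced here; it would be worth adding the clause ``with $g\cdot U=U$ for some hyperbolic $g\in\Gamma$'' to the proposition itself.
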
  
    Now that there are finitely many $I_j$'s, similar to Lemma \ref{lem-modified construction} we see that 
    \[|U_\Gamma(z_0)\smallsetminus\hat{U}_\Gamma(z_0)|<\infty.\]
    Covering the general case of non-elementary geometrically finite Fuchsian groups of second kind using Proposition \ref{prop-Fox} and \ref{prop-finite index}, we have proved
    \begin{thm}\label{thm-second kind}
    	Any non-elementary geometrically finite Fuchsian group of second kind has finite geodesic covers.
    \end{thm}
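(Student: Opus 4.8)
The plan is to run the same three-step scheme used for the co-finite case in Theorem~\ref{thm-co-finite}, with the role previously played by finiteness of area now played by the Nielsen region $N_\Gamma$. First I would reduce to the torsion-free case: by Proposition~\ref{prop-Fox} pick a finite-index subgroup $\Gamma'\leq\Gamma$ with no elliptic elements, observe that a finite-index subgroup of a non-elementary second-kind group has the same limit set and is therefore again non-elementary of second kind, and defer to Proposition~\ref{prop-finite index} at the end to transfer finiteness of geodesic covers back to $\Gamma$, with covering number at most $[\Gamma:\Gamma']$ times that of $\Gamma'$. From here on I work with $\Gamma$ torsion-free, non-elementary, of second kind, fix $z_0\in\bH^2$ and set $D:=D_\Gamma(z_0)$, so that by Proposition~\ref{prop-construction of geodesic cover} the set $U_\Gamma(z_0)$ is already a geodesic cover and it only remains to prove it is finite.

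The second step is to truncate against $N_\Gamma$. By Proposition~\ref{prop-Nielsen region} the polygon $\hat{D}_\Gamma(z):=D_\Gamma(z)\cap N_\Gamma$ has finite area, hence finitely many sides, for every $z$, and each free side of $D_\Gamma(z)$ lies in one of the complementary intervals of $\Lambda(\Gamma)$ in $\hat{\R}$. I would then perform exactly the horocyclic surgery of the previous subsection on the cusps of $\hat{D}_\Gamma(z)$, using Propositions~\ref{prop-Shimura} and~\ref{prop-horocyclic region}, and form $\hat{B}_\Gamma(z_0):=\bigcup_{z\in D}\hat{D}_\Gamma(z)$ together with $\hat{U}_\Gamma(z_0):=\{\gamma\in\Gamma\mid\gamma\cdot D\cap\hat{B}_\Gamma(z_0)\neq\emptyset\}$. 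Repeating the argument of Lemma~\ref{lem-compactness} shows the surgered set $\hat{B}_\Gamma(z_0)$ is compact, so local finiteness of the Dirichlet tessellation makes $\hat{U}_\Gamma(z_0)$ finite, and repeating Lemma~\ref{lem-modified construction} disposes of the cusp-escaping elements just as in the co-finite proof.

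The genuinely new step, and the one I expect to be the main obstacle, is controlling the elements of $U_\Gamma(z_0)\smallsetminus\hat{U}_\Gamma(z_0)$, i.e. the translates $\gamma\cdot D$ that reach outside $N_\Gamma$ rather than into a cusp. Because $\hat{B}_\Gamma(z_0)$ is compact and each $\hat{D}_\Gamma(z)$ has finitely many sides, $\hat{B}_\Gamma(z_0)$ possesses only finitely many free sides, lying in finitely many intervals $I_1,\dots,I_n\subset\hat{\R}\smallsetminus\Lambda(\Gamma)$; any $\gamma$ in the difference must send $D$ across the bounding geodesic $L_j$ of some $H_j$, hence into the hypercyclic region $U_j$ sharing its endpoints with $I_j$. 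The decisive estimate is then Proposition~\ref{prop-hypercyclic region}: $D$ meets only finitely many translates $\gamma\cdot U_j$, so for each of the finitely many $j$ there are only finitely many offending $\gamma$. The delicate point to pin down is that the finiteness of free sides of $\hat{B}_\Gamma(z_0)$ genuinely follows from the finite-sidedness furnished by Proposition~\ref{prop-Nielsen region} combined with the compactness argument, since this is exactly what reduces an a priori infinite obstruction to the finitely many hypercyclic regions to which Beardon's result applies.

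Combining the three steps yields $|U_\Gamma(z_0)\smallsetminus\hat{U}_\Gamma(z_0)|<\infty$, whence $U_\Gamma(z_0)$ itself is finite; Proposition~\ref{prop-finite index} then lifts this conclusion from $\Gamma'$ back to the original $\Gamma$, completing the proof.
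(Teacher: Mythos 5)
Your proposal follows essentially the same route as the paper: reduce to the torsion-free case via Propositions~\ref{prop-Fox} and~\ref{prop-finite index}, truncate Dirichlet domains against the Nielsen region $N_\Gamma$ using Proposition~\ref{prop-Nielsen region}, handle cusps by the horocyclic surgery of the co-finite case, and control the finitely many free sides via hypercyclic regions and Proposition~\ref{prop-hypercyclic region}. The only addition beyond the paper's argument is your explicit remark that a finite-index subgroup has the same limit set and so remains non-elementary of second kind, which is a correct and worthwhile detail.
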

    Wrapping up the elementary groups resorting to the classification by Proposition \ref{prop-elementary groups} which are easily shown to have finite geodesic covers by hand, we conclude the proof of Theorem \ref{thm-geometrically finite}, i.e. all geometrically finite Fuchsian groups have finite geodesic covers.

	\medskip
	\bigskip

\end{document}